\numberwithin{equation}{section}   %数学公式，equation环境中将按照section 排序
\title{\bf Mixed Leadership Stochastic Differential Game in Feedback Information Pattern with Applications \thanks{This work is supported by National Key R$\&$D Program of China (2022YFA1006100), National Natural Science Foundations of China (11971266, 11831010), and Shandong Provincial Natural Science Foundations (ZR2022JQ01, ZR2020ZD24, ZR2019ZD42).}}
\author{\normalsize  Qi Huang\thanks{\it School of Mathematics, Shandong University, Jinan 250100, P.R. China, E-mail: 201911814@mail.sdu.edu.cn} , Jingtao Shi\thanks{Corresponding author, \it School of Mathematics, Shandong University, Jinan 250100, P.R. China, E-mail: shijingtao@sdu.edu.cn}}
\newtheorem{mythm}{Theorem}[section]
\newtheorem{mydef}{Definition}[section]
\newtheorem{Remark}{Remark}[section]
\begin{document}
\maketitle

\noindent{\bf Abstract:}\quad This paper is devoted to a high-dimensional mixed leadership stochastic differential game on a finite horizon in feedback information mode, where the control variables enter into the diffusion term of state equation. A verification theorem for the feedback Stackelberg-Nash equilibrium is obtained by using a system of coupled and fully nonlinear parabolic partial differential equations. We apply the verification theorem to deal with a dynamic innovation and pricing decision problem where the buyer acts as the leader in the pricing decisions and the dynamic model is stochastic. Via the solutions of coupled Riccati equations, we explicitly express the feedback equilibrium strategies of innovation and pricing. And by analysis, the local existence and uniqueness of the solutions of the coupled Riccati equations is derived. We also conduct some numerical analyses to discuss the effects of model parameters on the feedback equilibrium strategies.

\vspace{2mm}

\noindent{\bf Keywords:}\quad Mixed leadership stochastic differential game, feedback Stackelberg-Nash equilibrium, verification theorem, innovation and pricing

\vspace{2mm}

\section{Introduction}

\hspace{0.4cm} In 1934, von Stackelberg first proposed the concept of Stackelberg solution in \cite{St1934}. In the context of Stackelberg two-person game, players have asymmetric roles, with one leading and the other following. Firstly, the follower solves his optimization problem for any decision announced by the leader. The follower's optimal response, which is a function of the leader's decision, will be obtained. Considering the follower's optimal response, the leader solves his optimal problem and gets his optimal policy under this response. By substituting the leader's optimal decision into the follower's response, the follower's optimal decision can be derived. These two decisions together make up the Stackelberg solution.\\
\indent There are a variety of equilibrium concepts in dynamic games due to different types of dominance of the leader over the follower and various information sets. Refer to Ba\c{s}ar and Olsder \cite{BO1998} for the concept of different equilibria in discrete-time and continuous-time deterministic cases. If the leader has a global advantage over the follower and has the ability to announce his policy for the entire duration at the beginning of the game, then the corresponding solution is called the global Stackelberg solution. And open-loop Stackelberg solution is one of the global Stackelberg solution. However, the global Stackelberg solution is time inconsistent, and both players must commit to their respective strategies which are predetermined at the beginning. See Bensoussan et al. \cite{BSP2015} for a detailed description of the way in which the global Stackelberg solution is played.\\
\indent If the leader moves ahead of the follower in every period based on the observed state, the corresponding solution is called the feedback Stackelberg solution which is time consistent. Therefore, the leader only has a phased advantage over the follower in this solution concept. Simaan and Cruz \cite{SC1973-1, SC1973-2} first introduced the feedback Stackelberg solution. Ba\c{s}ar and Haurie \cite{BH1984} argued that the continuous-time game problem can be regarded as the limit of discrete-time game sequences, but the convergence of discrete-time game solutions is a challenging open problem. Inspired by the results of the feedback Nash equilibrium, they circumvented the limit problem by defining a feedback Stackelberg equilibrium concept and derived the corresponding system of coupled {\it Hamilton-Jacobi-Bellman} (HJB, in short) equations to characterize the equilibrium. Bensoussan et al. \cite{BCS2014} studied an infinite-horizon Stackelberg stochastic differential game which involved Brownian motion, and obtained a sufficient condition.\\
\indent Most of the literature on Stackelberg games assumes that the characters of the players are fixed from the beginning with the leader remaining the leader and the follower remaining the follower throughout the game. Ba\c{s}ar \cite{B1973} first raised the question of whether being a leader is always beneficial to the player and proved that leadership was not always the first choice for the player. In fact, in many practical situations, there are no clear leaders or followers. For example, in a supply chain where a manufacturer sells a product through a retailer, the leader in advertising strategies is the manufacturer, while the leader in pricing strategies is the retailer. In this case, the manufacturer acts as a follower to make the wholesale price decision, and the retailer acts as a follower to make the decision related to local advertising of the product. In 2010, Ba\c{s}ar et al. \cite{BAS2010} first introduced the concept of the mixed leadership game, in which the same player can be a leader in some decisions and be a follower in others. And by applying the maximum principle, the necessary condition for the optimal equilibrium in open-loop information mode is obtained. The open-loop Stackelberg equilibrium of the {\it linear-quadratic} (LQ, in short) mixed leadership differential game was studied by Bensoussan et al. in \cite{BCS2013}. And under some coefficient assumptions, they proved that there existed a unique Stackelberg solution. Xie et al. \cite{XFH2021} considered a mixed LQ Stackelberg stochastic differential game with input constraint in open-loop information pattern. Different from the above literature which all discussed the open-loop solution of the mixed leadership game, Bensoussan et al. \cite{BCCS2019} gave the definition of the feedback Stackelberg-Nash equilibrium for the mixed leadership stochastic differential game under the feedback information mode. At the same time, they obtained a sufficient condition for the equilibrium, which was used to deal with a manufacturer-retailer cooperative advertising game problem. However, the problem studied in Bensoussan et al. \cite{BCCS2019} was one-dimensional and the diffusion term of the state equation didn't include two players' control variables.\\
\indent Inspired by the above literatures, in this paper we consider a high-dimensional mixed leadership stochastic differential game with a finite time zone in the feedback information pattern. In contrast to \cite{BCCS2019}, all the control variables of the two players enter the drift term and diffusion term of the state equation. And we derive a verification theorem for the feedback Stackelberg-Nash equilibrium by using the system of coupled HJB equations. Since they studied a one-dimensional problem with no controls in the diffusion term of the state equation, the second derivative terms of the value functions with respect to the state variable didn't enter into the Hamiltonian functions and the corresponding system of HJB equations in the verification theorem is the system of {\it ordinary differential equations} (ODEs, in short). Different from \cite{BCCS2019}, the Hamiltonian functions we consider contain the second partial derivative terms of the value functions with respect to the state variable due to the entry of controls into the diffusion term. In addition, the problem that we deal with is high dimension, so our system of coupled HJB equations is the one of coupled and fully nonlinear parabolic {\it partial differential equations} (PDEs, in short).\\
\indent We apply the above verification theorem to solve a dynamic innovation and pricing decision problem in a supply chain. There is a seller who sells the product to a buyer, and then the buyer sells it to the final consumers. What they need to do is making their own pricing strategies and innovation strategies. Song et al. \cite{SCDL2021} modeled the dynamic innovation and pricing strategy problem as a Stackelberg game where the seller as the leader first announces the wholesale price and his innovation effort, and the buyer as the follower decides on the retail price and his corresponding innovation effort to react. They derived the feedback equilibrium strategies and got useful managerial insights through analytical and numerical methods. See Giovanni \cite{G2011}, El Ouardighi et al. \cite{OJP2008} for more information on the dynamic game of innovation and pricing strategy issues. We note that the state equation in the game model they consider is a deterministic ODE. However, in practice, the dynamic innovation and pricing strategies may be disturbed by some random factors. Therefore, inspired by Tapiero \cite{T1975-1, T1975-2}, we extend the dynamic model of \cite{SCDL2021} to the stochastic case. In \cite{SCDL2021}, the seller acts as the leader in both innovation and pricing decisions. Whereas, according to Ba\c{s}ar et al. \cite{BAS2010}, Mozafari et al. \cite{MSS2021}, and Taleizadeh et al. \cite{TCCM2021}, we know that the large buyer like Walmart acts as the leader in pricing strategies in the actual supply chain. Thus, the seller is only the leader in innovation strategies. Given the leadership policies, the seller as a follower gives the wholesale price and the buyer as a follower gives his innovation effort. In this case, there are no clear leaders or followers. Therefore, the Stackelberg game is no longer applicable, and we need to apply the mixed leadership game theory to deal with this problem.\\
\indent By using the verification theorem which we obtain in the mixed leadership stochastic differential game, we solve the above supply chain problem in which the seller and the buyer are the leaders of innovation strategies and pricing strategies respectively. And via the solutions of coupled Riccati equations, the corresponding explicit representation of the feedback Stackelberg-Nash equilibrium is derived. According to Picard-Lindel{\"o}f's Theorem (\cite{AO2008}), we obtain the local existence and uniqueness of the solutions of the coupled Riccati equations. Finally, we conduct some numerical analyses to investigate the impact of representative model parameters $C_0$ and $\delta(t)$ on players' feedback equilibrium strategies and get some managerial insights.\\
\indent This paper is organized as follows. In the feedback information pattern, section \ref{section 2.1} formulates a high-dimensional mixed leadership stochastic differential game with players' control variables in the drift and the diffusion terms of the state equation. In section \ref{section 2.2}, we use the system of coupled HJB equations to derive a verification theorem for the feedback Stackelberg-Nash equilibrium. Based on this result, we study a dynamic innovation and pricing strategy problem in the supply chain in section \ref{section 3} and apply the solutions of coupled Riccati equations to explicitly express the feedback equilibrium strategies. Moreover, the local existence and uniqueness of the solutions of the coupled Riccati equations is guaranteed by Picard-Lindel{\"o}f's Theorem. In section \ref{section 4}, we numerically analyze the effects of representative model parameters on the feedback equilibrium strategies, and obtain some managerial insights.

\section{Mixed leadership stochastic differential game}\label{section 2}

\subsection{Problem formulation}\label{section 2.1}

\hspace{0.4cm} In this paper, $(\Omega,\mathcal{F},\{\mathcal{F}_t\}_{t\ge 0},\mathbb{P})$ is a complete probability space, and $W(\cdot)$ is a $m$-dimensional standard Brownian motion defined on it. For any $T>0$ and initial state $x_0 \in \mathbb R^n$, we consider a stochastic differential game with two players, 1 and 2. The state equation of the game is
\begin{equation}
\left\{
             \begin{aligned}
             dx(s)&=f\big(s,x(s),u_1(s),v_1(s),u_2(s),v_2(s)\big)ds\\
                  &\quad +\sigma\big(s,x(s),u_1(s),v_1(s),u_2(s),v_2(s)\big)dW(s),\quad 0 \le s \le T, \\
              x(0)&=x_0.
             \end{aligned}
\right.
\end{equation}
Here $f:[0,T]\times \mathbb{R}^n \times \mathbb{U}_1 \times \mathbb{V}_1 \times \mathbb{U}_2 \times \mathbb{V}_2 \to \mathbb{R}^n$ and $\sigma:[0,T]\times \mathbb{R}^n \times \mathbb{U}_1 \times \mathbb{V}_1 \times \mathbb{U}_2 \times \mathbb{V}_2 \to \mathbb{R}^{n\times m}$ are the drift and diffusion terms of the state equation, respectively. $(u_i(\cdot), v_i(\cdot))\in \mathbb{U}_i \times \mathbb{V}_i \subseteq \mathbb{R}^{k_i}\times \mathbb{R}^{z_i}$ are the decision variables of the player $i, i=1,2.$\\
\indent The expected profit functionals for the player $i$ to maximize is as follows:
\begin{equation}\label{cost functional-player i}
J_i(u_1,v_1,u_2,v_2)=\mathbb E\left[\int^T_0 g_i\big(s,x(s),u_1(s),v_1(s),u_2(s),v_2(s)\big)ds+h_i(x(T))\right],
\end{equation}
for $i=1,2$. Here $g_i:[0,T]\times \mathbb{R}^n \times \mathbb{U}_1 \times \mathbb{V}_1 \times \mathbb{U}_2 \times \mathbb{V}_2 \to \mathbb{R}$ and $h_i:\mathbb{R}^n \to \mathbb{R}(i=1,2)$ satisfy the following assumption.

{\bf (H1)}\quad $g_i,h_i,i=1,2$ are measurable functions which satisfy polynomial growth conditions, namely there exist positive constants $C$ and $p\ge 2$ such that
\begin{equation*}
\begin{aligned}
|g_i(s,x,u_1,v_1,u_2,v_2)|&\le C(1+|x|^p+|u_1|^p+|v_1|^p+|u_2|^p+|v_2|^p),\\
|h_i(x)|&\le C(1+|x|^p),\\
\forall (s,x,u_1,v_1,u_2,v_2)&\in [0,T]\times \mathbb{R}^n \times \mathbb{U}_1 \times \mathbb{V}_1 \times \mathbb{U}_2 \times \mathbb{V}_2.
\end{aligned}
\end{equation*}
And $g_i$ is continuously differentiable in $(u_1,v_1,u_2,v_2)\in \mathbb{U}_1 \times \mathbb{V}_1 \times \mathbb{U}_2 \times \mathbb{V}_2, i=1,2.$\\ \indent In the mixed leadership stochastic differential game under feedback information pattern, both players act as leaders in $u$ strategies and followers in $v$ strategies. At each instant of time, according to the observed state, player 1 and player 2 respectively and simultaneously determine their instantaneous leadership strategies $u_1(s,x(s))$ and $u_2(s,x(s))$ for any $s\in [0,T]$. Next, based on the observed state and the instantaneous leadership strategies, player 1 and player 2 make their respective follow-up instantaneous responses $v_1(s,x(s),u_1(s,x(s)),u_2(s,x(s)))$ and $v_2(s,x(s),u_1(s,x(s)),u_2(s,x(s)))$ by playing a feedback Nash game. Knowing the instantaneous Nash responses at the follow-up layer, player 1 and player 2 play a feedback Nash game again and get the corresponding feedback Nash equilibria in the leadership layer. Substitute the leadership feedback strategies into the feedback Nash responses of the follow-up layer and obtain the follow-up feedback strategies. There is a Stackelberg game vertically nested in two Nash games for the reason that the follow-up strategies are affected by the leadership strategies. Therefore, the admissible strategy spaces for the two players are defined as follows. For $i=1,2$,
\begin{equation*}
\begin{aligned}
&\mathcal{U}_i=\Big\{u_i \big|u_i:[0,T]\times {\mathbb{R}}^n \to \mathbb{U}_i, u_i(s,x) \textrm{ is uniformly locally Lipschitz continuous in } x,\textrm{ and}\\
&\qquad\qquad \textrm{ there exist constants } a, b>0 \textrm{ such that } |u_i(s,x)|\le a+b|x|,\ \forall (s,x) \in [0,T]\times {\mathbb{R}}^n \Big\},\\
&\mathcal{V}_i=\Big\{v_i \big|v_i:[0,T]\times {\mathbb{R}}^n \times \mathbb{U}_1 \times \mathbb{U}_2 \to \mathbb{V}_i, v_i(s,x,\mu_1,\mu_2) \textrm{ is uniformly locally Lipschitz }\\
&\qquad\qquad \textrm{continuous in } (x,\mu_1,\mu_2) \textrm{ and there exist constants } a, b>0 \textrm{ such that }\\
&\qquad\qquad |v_i(s,x,\mu_1,\mu_2)|\le a+b(|x|+|\mu_1|+|\mu_2|),\ \forall (s,x,\mu_1,\mu_2) \in [0,T]\times {\mathbb{R}}^n \times \mathbb{U}_1 \times \mathbb{U}_2\Big\}.
\end{aligned}
\end{equation*}
\indent For any $(t,x)\in [0,T]\times {\mathbb{R}}^n$ and $(u_1,v_1,u_2,v_2)\in \mathcal{U}_1 \times \mathcal{V}_1 \times \mathcal{U}_2 \times \mathcal{V}_2$, consider the following parameterized state equation:
\begin{equation}\label{the parameterized state equation}
\left\{
             \begin{aligned}
             dx(s)&=f\big(s,x(s),u_1(s,x(s)),v_1(s,x(s),u_1(s,x(s)),u_2(s,x(s))),\\
                  &\qquad u_2(s,x(s)),v_2(s,x(s),u_1(s,x(s)),u_2(s,x(s)))\big)ds\\
                  &\quad +\sigma(s,x(s),u_1(s,x(s)),v_1(s,x(s),u_1(s,x(s)),u_2(s,x(s))),\\
                  &\qquad u_2(s,x(s)),v_2(s,x(s),u_1(s,x(s)),u_2(s,x(s))))dW(s),\  t \le s \le T, \\
              x(t)&=x.
             \end{aligned}
\right.
\end{equation}
\indent To ensure the existence of the unique solution to $(\ref{the parameterized state equation})$, we assume that $f$ and $\sigma$ satisfy the following conditions:\\
{\bf (H2)}\quad (i) $f:[0,T]\times \mathbb{R}^n \times \mathbb{U}_1 \times \mathbb{V}_1 \times \mathbb{U}_2 \times \mathbb{V}_2 \to \mathbb{R}^n$ and $\sigma:[0,T]\times \mathbb{R}^n \times \mathbb{U}_1 \times \mathbb{V}_1 \times \mathbb{U}_2 \times \mathbb{V}_2 \to \mathbb{R}^{n\times m}$ are measurable functions. $f$ and $\sigma$ are uniformly locally Lipschitz continuous in $(x,u_1,v_1,u_2,v_2)$.\\
(ii) $f$ and $\sigma$ are continuously differentiable in $(u_1,v_1,u_2,v_2)\in \mathbb{U}_1 \times \mathbb{V}_1 \times \mathbb{U}_2 \times \mathbb{V}_2$.\\
(iii) There exist constants $c, d>0$ such that
\begin{equation*}
\begin{aligned}
&|f(s,x,u_1,v_1,u_2,v_2)|\le c+d(|x|+|u_1|+|v_1|+|u_2|+|v_2|),\\
&|\sigma(s,x,u_1,v_1,u_2,v_2)|\le c+d(|x|+|u_1|+|v_1|+|u_2|+|v_2|),\\
&\qquad \forall (s,x,u_1,v_1,u_2,v_2)\in [0,T]\times \mathbb{R}^n \times \mathbb{U}_1 \times \mathbb{V}_1 \times \mathbb{U}_2 \times \mathbb{V}_2.
\end{aligned}
\end{equation*}
\indent When the strategies $(u_1,v_1,u_2,v_2)$ selected from the context is unambiguous, we denote by $x^{t,x}(\cdot)$ of the solution to the parameterized state equation $(\ref{the parameterized state equation})$. The corresponding expected profit functionals of the player $i$ is
\begin{equation}\label{parameterized cost functionals}
\begin{split}
&J^{t,x}_i\big(u_1(\cdot,\cdot),v_1(\cdot,\cdot,u_1(\cdot,\cdot),u_2(\cdot,\cdot)),u_2(\cdot,\cdot),v_2(\cdot,\cdot,u_1(\cdot,\cdot),u_2(\cdot,\cdot))\big)\\
&=\mathbb E\bigg[\int^T_t g_i\big(s,x^{t,x}(s),u_1(s,x^{t,x}(s)),v_1(s,x^{t,x}(s),u_1(s,x^{t,x}(s)),u_2(s,x^{t,x}(s))),\\
&\qquad\qquad\qquad u_2(s,x^{t,x}(s)),v_2(s,x^{t,x}(s),u_1(s,x^{t,x}(s)),u_2(s,x^{t,x}(s)))\big)ds+h_i(x^{t,x}(T))\bigg].
\end{split}
\end{equation}

\begin{mydef}\label{Definition}
For any $(t,x)\in [0,T]\times \mathbb{R}^n$, if the following holds:
\begin{equation}\label{feedback Stackelberg-Nash equilibrium}
\left\{
\begin{aligned}
&J^{t,x}_1 (u^*_1(\cdot,\cdot),v^*_1(\cdot,\cdot,u^*_1(\cdot,\cdot),u^*_2(\cdot,\cdot)),u^*_2(\cdot,\cdot),v^*_2(\cdot,\cdot,u^*_1(\cdot,\cdot),u^*_2(\cdot,\cdot)))\\
&\ge J^{t,x}_1 (u_1(\cdot,\cdot),v_1(\cdot,\cdot,u_1(\cdot,\cdot),u^*_2(\cdot,\cdot)),u^*_2(\cdot,\cdot),v^*_2(\cdot,\cdot,u_1(\cdot,\cdot),u^*_2(\cdot,\cdot))),\\
&\hspace{5cm}\forall (u_1,v_1)\in \mathcal{U}_1 \times \mathcal{V}_1,\\
&J^{t,x}_2 (u^*_1(\cdot,\cdot),v^*_1(\cdot,\cdot,u^*_1(\cdot,\cdot),u^*_2(\cdot,\cdot)),u^*_2(\cdot,\cdot),v^*_2(\cdot,\cdot,u^*_1(\cdot,\cdot),u^*_2(\cdot,\cdot)))\\
&\ge J^{t,x}_2 (u^*_1(\cdot,\cdot),v^*_1(\cdot,\cdot,u^*_1(\cdot,\cdot),u_2(\cdot,\cdot)),u_2(\cdot,\cdot),v_2(\cdot,\cdot,u^*_1(\cdot,\cdot),u_2(\cdot,\cdot))),\\
&\hspace{5cm}\forall (u_2,v_2)\in \mathcal{U}_2 \times \mathcal{V}_2,\\
\end{aligned}
\right.
\end{equation}
the two pairs of strategies $(u^*_i,v^*_i)\in \mathcal{U}_i \times \mathcal{V}_i, i=1,2$, are called the feedback Stackelberg-Nash equilibrium.
\end{mydef}

\subsection{Verification theorem}\label{section 2.2}

\hspace{0.4cm} We will provide a verification theorem which is a sufficient condition for the feedback Stackelberg-Nash equilibrium in this subsection.

Let $\mathcal{S}^n$ represent the set of symmetric $n\times n$ matrices $A=(A_{ij}), i,j=1, \cdots, n.$ Set $a=\sigma {\sigma}^\top$ and $\mathrm{tr}[aA]=\sum\limits_{i=1}^n \sum\limits_{j=1}^n a_{ij}A_{ij}$. Let $C_p([0,T]\times \mathbb{R}^n)$ represent the set of all continuous functions $\Psi(s,x)$ on $[0,T]\times \mathbb{R}^n$ which satisfy the following polynomial growth condition, namely there exists a positive constant $C$ and $p$ mentioned above such that
\[|\Psi(s,x)| \le C(1+|x|^p).\]
Let $C^{1,2}([0,T]\times \mathbb{R}^n)$ represent the set of all continuous functions $\Psi(s,x)$ on $[0,T]\times \mathbb{R}^n$ with continuous partial derivative in $s$ and 2-order continuous partial derivative in $x$.

We introduce the Hamiltonian functions for the two players as follows:
\begin{equation*}
\begin{aligned}
&H_1(s,x,\mu_1,\nu_1,\mu_2,\nu_2,p_1,A')\triangleq\big\langle p_1,f(s,x,\mu_1,\nu_1,\mu_2,\nu_2)\big\rangle+\frac{1}{2}\mathrm{tr}\big[a(s,x,\mu_1,\nu_1,\mu_2,\nu_2)A'\big]\\
&\qquad\qquad\qquad\qquad\qquad\qquad\qquad +g_1(s,x,\mu_1,\nu_1,\mu_2,\nu_2),
\end{aligned}
\end{equation*}
\begin{equation}
\begin{aligned}
&H_2(s,x,\mu_1,\nu_1,\mu_2,\nu_2,p_2,A'')\triangleq\big\langle p_2,f(s,x,\mu_1,\nu_1,\mu_2,\nu_2)\big\rangle+\frac{1}{2}\mathrm{tr}\big[a(s,x,\mu_1,\nu_1,\mu_2,\nu_2)A''\big]\\
&\qquad\qquad\qquad\qquad\qquad\qquad\qquad +g_2(s,x,\mu_1,\nu_1,\mu_2,\nu_2),
\end{aligned}
\end{equation}
where $H_i:[0,T]\times \mathbb{R}^n \times \mathbb{U}_1 \times \mathbb{V}_1 \times \mathbb{U}_2 \times \mathbb{V}_2 \times \mathbb{R}^n \times \mathcal{S}^n \to \mathbb{R}$, $i=1,2$.\\
\indent Given a pair of leadership actions $(\mu_1,\mu_2)\in \mathbb{U}_1 \times \mathbb{U}_2,$ we first solve a static Nash game with $\nu_1$ and $\nu_2$ as the strategy variables at the level of Hamiltonian. Suppose there exists a unique solution $(\Gamma_1^{\nu}(s,x,\mu_1,\mu_2,p_1,p_2,A',A''),\Gamma_2^{\nu}(s,x,\mu_1,\mu_2,p_1,p_2,A',A''))$ to the following equations
\begin{equation}
\left\{
\begin{aligned}
&\Gamma_1^{\nu}\big(s,x,\mu_1,\mu_2,p_1,p_2,A',A''\big)\\
&\triangleq\mathop{argmax}\limits_{\nu_1 \in\, \mathbb{V}_1} H_1\big(s,x,\mu_1,\nu_1,\mu_2,\Gamma_2^{\nu}(s,x,\mu_1,\mu_2,p_1,p_2,A',A''),p_1,A'\big),\\
&\Gamma_2^{\nu}\big(s,x,\mu_1,\mu_2,p_1,p_2,A',A''\big)\\
&\triangleq\mathop{argmax}\limits_{\nu_2 \in\, \mathbb{V}_2} H_2\big(s,x,\mu_1,\Gamma_1^{\nu}(s,x,\mu_1,\mu_2,p_1,p_2,A',A''),\mu_2,\nu_2,p_2,A''\big).
\end{aligned}
\right.
\end{equation}
\indent Considering the response at the $\nu$ level, we substitute $(\Gamma_1^{\nu},\Gamma_2^{\nu})$ into $H_1, H_2$ and solve the corresponding static Nash game between $\mu_1$ and $\mu_2$ at the level of Hamiltonian. We assume that the following equations have a unique solution $\big(\Gamma_1^{\mu}(s,x,p_1,p_2,A',A''),\Gamma_2^{\mu}(s,x,p_1,p_2,A',A'')\big)$:
\begin{equation}
\left\{
\begin{aligned}
&\Gamma_1^{\mu}(s,x,p_1,p_2,A',A'')\\
&\triangleq\mathop{argmax}\limits_{\mu_1 \in\, \mathbb{U}_1} H_1\big(s,x,\mu_1,\Gamma_1^{\nu}(s,x,\mu_1,\Gamma_2^{\mu}(s,x,p_1,p_2,A',A''),p_1,p_2,A',A''),\\
&\qquad\quad \Gamma_2^{\mu}(s,x,p_1,p_2,A',A''),\Gamma_2^{\nu}(s,x,\mu_1,\Gamma_2^{\mu}(s,x,p_1,p_2,A',A''),p_1,p_2,A',A''),p_1,A'),\\
&\Gamma_2^{\mu}(s,x,p_1,p_2,A',A''\big)\\
&\triangleq\mathop{argmax}\limits_{\mu_2 \in\, \mathbb{U}_2} H_2\big(s,x,\Gamma_1^{\mu}(s,x,p_1,p_2,A',A''),\\
&\qquad\quad \Gamma_1^{\nu}(s,x,\Gamma_1^{\mu}(s,x,p_1,p_2,A',A''),\mu_2,p_1,p_2,A',A''),\\
&\qquad\quad \mu_2,\Gamma_2^{\nu}(s,x,\Gamma_1^{\mu}(s,x,p_1,p_2,A',A''),\mu_2,p_1,p_2,A',A''),p_2,A''\big).
\end{aligned}
\right.
\end{equation}

\indent We give the following verification theorem.

\begin{mythm}
Let $(H1)$ and $(H2)$ hold. Suppose $V_1(s,x)$, $V_2(s,x)$ both lie in $C^{1,2}([0,T]\times \mathbb{R}^n) \cap C_p([0,T]\times \mathbb{R}^n)$ and there exist $(u^*_1,v^*_1)\in \mathcal{U}_1 \times \mathcal{V}_1, (u^*_2,v^*_2)\in \mathcal{U}_2 \times \mathcal{V}_2,$ where
\begin{equation}\label{proof equilibrium definition 1}
\left\{
\begin{aligned}
&u_1^*(s,x) \triangleq \Gamma_1^{\mu}\Big(s,x,\frac{\partial V_1}{\partial x},\frac{\partial V_2}{\partial x},\frac{{\partial}^2 V_1}{\partial x^2},\frac{{\partial}^2 V_2}{\partial x^2}\Big),\\
&u_2^*(s,x) \triangleq \Gamma_2^{\mu}\Big(s,x,\frac{\partial V_1}{\partial x},\frac{\partial V_2}{\partial x},\frac{{\partial}^2 V_1}{\partial x^2},\frac{{\partial}^2 V_2}{\partial x^2}\Big),\\
\end{aligned}
\right.
\end{equation}
\begin{equation}\label{proof equilibrium definition 2}
\left\{
\begin{aligned}
&v_1^*(s,x,u_1^*(s,x),u_2^*(s,x)) \triangleq \Gamma_1^{\nu}\Big(s,x,u_1^*(s,x),u_2^*(s,x),\frac{\partial V_1}{\partial x},\frac{\partial V_2}{\partial x},\frac{{\partial}^2 V_1}{\partial x^2},\frac{{\partial}^2 V_2}{\partial x^2}\Big),\\
&v_2^*(s,x,u_1^*(s,x),u_2^*(s,x)) \triangleq \Gamma_2^{\nu}\Big(s,x,u_1^*(s,x),u_2^*(s,x),\frac{\partial V_1}{\partial x},\frac{\partial V_2}{\partial x},\frac{{\partial}^2 V_1}{\partial x^2},\frac{{\partial}^2 V_2}{\partial x^2}\Big),\\
\end{aligned}
\right.
\end{equation}
such that
\begin{equation}\label{HJB-1}
\left\{
\begin{aligned}
&\frac{\partial V_1}{\partial s}(s,x)+H_1\Big(s,x,u_1^*(s,x),v_1^*(s,x,u_1^*(s,x),u_2^*(s,x)),\\
&\qquad\qquad\qquad\quad u_2^*(s,x),v_2^*(s,x,u_1^*(s,x),u_2^*(s,x)),\frac{\partial V_1}{\partial x},\frac{{\partial}^2 V_1}{\partial x^2}\Big)=0,\\
&V_1(T,x)=h_1(x),
\end{aligned}
\right.
\end{equation}
\begin{equation}\label{HJB-2}
\left\{
\begin{aligned}
&\frac{\partial V_2}{\partial s}(s,x)+H_2\Big(s,x,u_1^*(s,x),v_1^*(s,x,u_1^*(s,x),u_2^*(s,x)),\\
&\qquad\qquad\qquad\quad u_2^*(s,x),v_2^*(s,x,u_1^*(s,x),u_2^*(s,x)),\frac{\partial V_2}{\partial x},\frac{{\partial}^2 V_2}{\partial x^2}\Big)=0,\\
&V_2(T,x)=h_2(x),
\end{aligned}
\right.
\end{equation}
for all $(s,x)\in [0,T]\times \mathbb{R}^n$. Then $\big\{(u_1^*(\cdot,\cdot),v_1^*(\cdot,\cdot,u_1^*(\cdot,\cdot),u_2^*(\cdot,\cdot))),(u_2^*(\cdot,\cdot),v_2^*(\cdot,\cdot,u_1^*(\cdot,\cdot),u_2^*(\cdot,\cdot)))\big\}$
is the feedback Stackelberg-Nash equilibrium.

\end{mythm}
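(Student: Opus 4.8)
The plan is to reduce each of the two defining inequalities in Definition~\ref{Definition} to a pointwise comparison of Hamiltonians, by first deriving an It\^o-formula representation of the parameterized cost functionals \eqref{parameterized cost functionals} in terms of $V_1$ and $V_2$. Fixing $(t,x)$ and an arbitrary admissible profile, I would apply It\^o's formula to $s\mapsto V_i(s,x^{t,x}(s))$ along the solution of \eqref{the parameterized state equation} (well-posed, since any deviation lies again in $\mathcal U_i\times\mathcal V_i$ and is locally Lipschitz with linear growth), and rewrite the drift through the Hamiltonian to get
\[
dV_i(s,x^{t,x}(s))=\Big[\frac{\partial V_i}{\partial s}+H_i\big(s,x^{t,x}(s),\cdots,\tfrac{\partial V_i}{\partial x},\tfrac{\partial^2 V_i}{\partial x^2}\big)-g_i\Big]\,ds+\big\langle \sigma^\top\tfrac{\partial V_i}{\partial x},dW(s)\big\rangle,
\]
where $H_i$ and $g_i$ are evaluated at the profile actually driving the state. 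Integrating on $[t,T]$, using $V_i(T,\cdot)=h_i$ and taking expectations yields the representation $J_i^{t,x}=V_i(t,x)+\mathbb E\int_t^T[\partial_s V_i+H_i]\,ds$, provided the stochastic integral has zero mean.

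To justify the vanishing of the martingale term I would localize by the stopping times $\tau_N=\inf\{s\ge t:|x^{t,x}(s)|\ge N\}\wedge T$, so that each stopped stochastic integral is a genuine martingale of zero expectation; the moment bound $\mathbb E[\sup_{t\le s\le T}|x^{t,x}(s)|^p]<\infty$, which follows from the linear growth of $f,\sigma$ in (H2)(iii) together with the linear-growth bounds built into $\mathcal U_i,\mathcal V_i$, combined with the polynomial growth of $V_i\in C_p([0,T]\times\mathbb R^n)$ and of $g_i,h_i$ in (H1), lets me send $N\to\infty$ by dominated convergence and recover the representation for every admissible profile. At the starred profile the HJB equations \eqref{HJB-1}--\eqref{HJB-2} force $\partial_s V_i+H_i\equiv0$, so $J_i^{t,x}(\text{starred})=V_i(t,x)$; it then remains to show that any admissible deviation makes $\partial_s V_i+H_i$ pointwise nonpositive, equivalently, by the HJB equation, that $H_i$ at the deviation does not exceed $H_i$ at the starred profile.

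This pointwise Hamiltonian inequality is the crux, and it is where the nested Stackelberg-within-Nash structure must be unwound in the right order. For Player~1's inequality, $u_2=u_2^*$ is held fixed while Player~2's follow-up reacts through $v_2^*(\cdot)=\Gamma_2^\nu(s,x,u_1,u_2^*,\cdots)$ to the \emph{deviated} $u_1$; I would first invoke the $\nu$-level Nash characterization ($\Gamma_1^\nu$ maximizes $H_1$ over $\nu_1$ when the opponent plays $\Gamma_2^\nu$) to obtain $H_1(\cdots,u_1,v_1,u_2^*,\Gamma_2^\nu(u_1,u_2^*),\cdots)\le H_1(\cdots,u_1,\Gamma_1^\nu(u_1,u_2^*),u_2^*,\Gamma_2^\nu(u_1,u_2^*),\cdots)$ for every $v_1$, all evaluated at $p_i=\partial V_i/\partial x$, $A'=\partial^2V_1/\partial x^2$, $A''=\partial^2V_2/\partial x^2$. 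Then I would invoke the $\mu$-level Nash characterization of $\Gamma_1^\mu=u_1^*$ (with $\mu_2=\Gamma_2^\mu=u_2^*$ fixed), which after substitution of the $\nu$-level responses maximizes the right-hand side over $u_1$, its maximizer being $u_1^*$ and its value being exactly $H_1$ at $(u_1^*,v_1^*,u_2^*,v_2^*)$. Chaining the two inequalities gives $H_1(\text{deviation})\le H_1(\text{starred})$, hence $J_1^{t,x}$ is maximized at the starred profile, and the symmetric argument yields Player~2's inequality in \eqref{feedback Stackelberg-Nash equilibrium}. The main obstacle I anticipate is bookkeeping: one must keep the \emph{same} profile in the SDE and in the Hamiltonian being compared, and must apply the two argmax characterizations in the order $\nu$ then $\mu$ while remembering that $v_2^*$ itself shifts with the deviated $u_1$; reversing the order or freezing $v_2^*$ would break the chain.
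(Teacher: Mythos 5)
Your proposal is correct and follows essentially the same route as the paper's own proof: It\^o's formula applied to $V_i(s,x^{t,x}(s))$ with localization (the paper uses exit times from balls $B_r(x)$, you use $\tau_N$ — equivalent), dominated convergence via the polynomial growth of $V_i$, $g_i$, $h_i$, and then the pointwise bound $\partial_s V_1+H_1\le 0$ at any deviation obtained by chaining the $\nu$-level and $\mu$-level argmax characterizations, which is exactly the paper's $\sup_{\mu_1}\sup_{\nu_1}$ step. You also correctly identify the one subtlety the paper relies on — that the follower response $v_2^*$ must react to the deviated $u_1$ throughout — so nothing is missing.
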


\begin{proof}
Define
\begin{equation}\label{proof equilibrium definition 3}
\begin{aligned}
&v_1^*(s,x,\mu_1,\mu_2)\triangleq \Gamma_1^{\nu}\Big(s,x,\mu_1,\mu_2,\frac{\partial V_1}{\partial x},\frac{\partial V_2}{\partial x},\frac{{\partial}^2 V_1}{\partial x^2},\frac{{\partial}^2 V_2}{\partial x^2}\Big),\\
&v_2^*(s,x,\mu_1,\mu_2)\triangleq \Gamma_2^{\nu}\Big(s,x,\mu_1,\mu_2,\frac{\partial V_1}{\partial x},\frac{\partial V_2}{\partial x},\frac{{\partial}^2 V_1}{\partial x^2},\frac{{\partial}^2 V_2}{\partial x^2}\Big),
\end{aligned}
\end{equation}
for any $(s,x)\in [0,T]\times \mathbb{R}^n, (\mu_1, \mu_2)\in \mathbb{U}_1 \times \mathbb{U}_2.$\\
\indent Suppose player 1 adopts any pair of strategies $(u_1, v_1)\in \mathcal{U}_1 \times \mathcal{V}_1$ and player 2 adopts the pair of strategies $(u_2^*(\cdot,\cdot),v_2^*(\cdot,\cdot,u_1(\cdot,\cdot),u_2^*(\cdot,\cdot))),$ where $u_2^*(\cdot,\cdot)$ is defined by $(\ref{proof equilibrium definition 1})$ and $v_2^*(\cdot,\cdot,u_1(\cdot,\cdot),u_2^*(\cdot,\cdot))$ is defined by $(\ref{proof equilibrium definition 3}).$ For any $(t,x)\in [0,T]\times \mathbb{R}^n,$ we denote by $x^{t,x}(\cdot)$ of the solution to the corresponding parameterized state equation.\\
\indent For sake of notation, we will omit the arguments of the policies $u_i$ and $v_i$ in $f, \sigma, g_i, i=1,2.$ Let $B_r(x)$ denote an open ball of radius $r$, centered at $x$, i.e.,
\[B_r(x)\triangleq \left\{y\in \mathbb{R}^n \arrowvert \sqrt{\sum\limits_{i=1}^n |x_i-y_i|^2}<r\right\}.\]
And let $\tau_r$ represent the first exit time of $x^{t,x}(\cdot)$ from $B_r(x)$, i.e.,
\[\tau_r \triangleq \inf\big\{s|x^{t,x}(s)\notin B_r(x),\ t\le s\le T\big\}.\]
If $x^{t,x}(s)\in B_r(x)$ for all $s\in [t,T],$ we set $\tau_r=T$.\\
\indent By applying It\^{o}'s formula to $V_1(s,x^{t,x}(s))$, integrating from $t$ to $\tau_r \wedge T$, and taking expectation, we obtain
\begin{equation*}
\begin{split}
&V_1(t,x)=\mathbb E\big[V_1(\tau_r \wedge T,x^{t,x}(\tau_r \wedge T))\big]\\
&\qquad\qquad -\mathbb E\Bigg\{\int_t^{\tau_r \wedge T}\bigg[\frac{\partial V_1}{\partial s}(s,x^{t,x}(s))+\bigg\langle\frac{\partial V_1}{\partial x}(s,x^{t,x}(s)),f\big(s,x^{t,x}(s),u_1,v_1,u_2^*,v_2^*\big)\bigg\rangle\\
&\qquad\qquad\qquad +\frac{1}{2} \mathrm{tr}\Big[a\big(s,x^{t,x}(s),u_1,v_1,u_2^*,v_2^*\big)\frac{{\partial}^2 V_1}{\partial x^2}(s,x^{t,x}(s))\Big]\\
&\qquad\qquad\qquad +g_1\big(s,x^{t,x}(s),u_1,v_1,u_2^*,v_2^*\big)\bigg] ds \Bigg\}+\mathbb E\int_t^{\tau_r \wedge T}g_1\big(s,x^{t,x}(s),u_1,v_1,u_2^*,v_2^*\big)ds\\\
&\qquad\quad =\mathbb E\Big[\int_t^{\tau_r \wedge T}g_1\big(s,x^{t,x}(s),u_1,v_1,u_2^*,v_2^*\big)ds +V_1(\tau_r \wedge T,x^{t,x}(\tau_r \wedge T))\Big]\\
&\qquad\qquad -\mathbb E\Bigg\{\int_t^{\tau_r \wedge T}\bigg[\frac{\partial V_1}{\partial s}(s,x^{t,x}(s))+H_1\Big(s,x^{t,x}(s),u_1,v_1,u_2^*,v_2^*,\\
&\qquad\qquad\qquad\qquad \frac{\partial V_1}{\partial x}(s,x^{t,x}(s)),\frac{{\partial}^2 V_1}{\partial x^2}(s,x^{t,x}(s))\Big) \bigg]ds \Bigg\}\\
&\qquad\quad \ge \mathbb E\Big[\int_t^{\tau_r \wedge T}g_1\big(s,x^{t,x}(s),u_1,v_1,u_2^*,v_2^*\big)ds +V_1(\tau_r \wedge T,x^{t,x}(\tau_r \wedge T))\Big]\\
\end{split}
\end{equation*}
\begin{equation*}
\begin{split}
&\qquad\qquad -\mathbb E\Bigg\{\int_t^{\tau_r \wedge T}\bigg[\frac{\partial V_1}{\partial s}(s,x^{t,x}(s))+\sup\limits_{\mu_1 \in\, \mathbb{U}_1}\Big[\sup\limits_{\nu_1 \in\, \mathbb{V}_1} H_1\Big(s,x^{t,x}(s),\mu_1,\nu_1,u_2^*(s,x^{t,x}(s)),\\
&\qquad\qquad\qquad\qquad v_2^*(s,x^{t,x}(s),\mu_1,u_2^*(s,x^{t,x}(s))),\frac{\partial V_1}{\partial x}(s,x^{t,x}(s)),\frac{{\partial}^2 V_1}{\partial x^2}(s,x^{t,x}(s))\Big) \Big] \bigg] ds \Bigg\}\\
&\qquad\quad =\mathbb E\Big[\int_t^{\tau_r \wedge T}g_1\big(s,x^{t,x}(s),u_1,v_1,u_2^*,v_2^*\big)ds +V_1(\tau_r \wedge T,x^{t,x}(\tau_r \wedge T))\Big].
\end{split}
\end{equation*}
We can apply the dominated convergence theorem because of the polynomial growth condition on $g_i$ and $V_i, i=1,2.$ By sending $r \to \infty$, we get
\begin{equation}\label{player-1-proof-1}
\begin{split}
V_1(t,x)&\ge \mathbb E\Big[\int_t^{T}g_1\big(s,x^{t,x}(s),u_1,v_1,u_2^*,v_2^*\big)ds +h_1(x^{t,x}(T))\Big]\\
&= J^{t,x}_1 \big(u_1(\cdot,\cdot),v_1(\cdot,\cdot,u_1(\cdot,\cdot),u^*_2(\cdot,\cdot)),u^*_2(\cdot,\cdot),v^*_2(\cdot,\cdot,u_1(\cdot,\cdot),u^*_2(\cdot,\cdot))\big),
\end{split}
\end{equation}
for any $(u_1,v_1)\in \mathcal{U}_1 \times \mathcal{V}_1.$\\
\indent When player 1 adopts the pair of strategies $(u_1^*(\cdot,\cdot),v_1^*(\cdot,\cdot,u_1^*(\cdot,\cdot),u_2^*(\cdot,\cdot)))$ and player 2 adopts the pair of strategies $(u_2^*(\cdot,\cdot),v_2^*(\cdot,\cdot,u_1^*(\cdot,\cdot),u_2^*(\cdot,\cdot)))$, we use $x^{t,x,*}(\cdot)$ to represent the solution to the corresponding parameterized state equation. Analogously, if we apply It\^{o}'s formula to $V_1(s,x^{t,x,*}(s))$ and follow the procedure above, we can obtain
\begin{equation}\label{player-1-proof-2}
\begin{split}
V_1(t,x)&= \mathbb
E\Big[\int_t^{T}g_1\big(s,x^{t,x,*}(s),u_1^*,v_1^*,u_2^*,v_2^*\big)ds +h_1(x^{t,x,*}(T))\Big]\\
&= J^{t,x}_1 \big(u_1^*(\cdot,\cdot),v_1^*(\cdot,\cdot,u_1^*(\cdot,\cdot),u^*_2(\cdot,\cdot)),u^*_2(\cdot,\cdot),v^*_2(\cdot,\cdot,u_1^*(\cdot,\cdot),u^*_2(\cdot,\cdot))\big).
\end{split}
\end{equation}
According to $(\ref{player-1-proof-1})$ and $(\ref{player-1-proof-2})$, we derive
\begin{equation*}
\begin{split}
&J^{t,x}_1 (u^*_1(\cdot,\cdot),v^*_1(\cdot,\cdot,u^*_1(\cdot,\cdot),u^*_2(\cdot,\cdot)),u^*_2(\cdot,\cdot),v^*_2(\cdot,\cdot,u^*_1(\cdot,\cdot),u^*_2(\cdot,\cdot)))\\
&\ge J^{t,x}_1 (u_1(\cdot,\cdot),v_1(\cdot,\cdot,u_1(\cdot,\cdot),u^*_2(\cdot,\cdot)),u^*_2(\cdot,\cdot),v^*_2(\cdot,\cdot,u_1(\cdot,\cdot),u^*_2(\cdot,\cdot))),\\
\end{split}
\end{equation*}
for any $(u_1,v_1)\in \mathcal{U}_1 \times \mathcal{V}_1.$\\
\indent A similar discussion is made for player 2 based on symmetry. We can get
\begin{equation*}
\begin{split}
&J^{t,x}_2 (u^*_1(\cdot,\cdot),v^*_1(\cdot,\cdot,u^*_1(\cdot,\cdot),u^*_2(\cdot,\cdot)),u^*_2(\cdot,\cdot),v^*_2(\cdot,\cdot,u^*_1(\cdot,\cdot),u^*_2(\cdot,\cdot)))\\
&\ge J^{t,x}_2 (u_1^*(\cdot,\cdot),v_1^*(\cdot,\cdot,u_1^*(\cdot,\cdot),u_2(\cdot,\cdot)),u_2(\cdot,\cdot),v_2(\cdot,\cdot,u^*_1(\cdot,\cdot),u_2(\cdot,\cdot))),\\
\end{split}
\end{equation*}
for any $(u_2,v_2)\in \mathcal{U}_2 \times \mathcal{V}_2.$\\
\indent By Definition $\ref{Definition}$, we know $\big\{(u^*_1(\cdot,\cdot),v^*_1(\cdot,\cdot,u^*_1(\cdot,\cdot),u^*_2(\cdot,\cdot))), (u^*_2(\cdot,\cdot),v^*_2(\cdot,\cdot,u^*_1(\cdot,\cdot),
u^*_2(\cdot,\cdot)))\big\}$ defined by $(\ref{proof equilibrium definition 1})$ and $(\ref{proof equilibrium definition 2})$ is the feedback Stackelberg-Nash equilibrium. The proof is complete.

\end{proof}

\section{Application to dynamic innovation and pricing decisions in a supply chain}\label{section 3}

\hspace{0.4cm} In this section, we consider a model of the dynamic innovation and pricing decision problem in the supply chain (Song et al. \cite{SCDL2021}). In the dynamic supply chain, there is a seller and a buyer. The seller sells the product to the buyer, who eventually sells the product to the final consumers. They both need to decide on their respective innovation strategies and pricing strategies. Before going into further detail about our model, we introduce some notations as follows:\\
\begin{equation*}
\begin{split}
s, b \quad\qquad\qquad\qquad\quad &\textrm{subscripts used for seller and buyer, respectively;}\\
T\ge 0 \quad\quad\qquad\qquad\quad &\textrm{the terminal time;}\\
t \qquad\quad\qquad\qquad\quad &\textrm{time } t,\quad t\in [0,T];\\
x(t)\in [0,\infty) \qquad\qquad\quad &\textrm{goodwill of the product at time } t;\\
I_s(t)\in [0,\infty) \qquad\qquad\quad &\textrm{seller's innovation effort at time } t;\\
I_b(t)\in [0,\infty) \qquad\qquad\quad &\textrm{buyer's innovation effort at time } t;\\
w(t)\in [0,\infty) \qquad\qquad\quad &\textrm{unit wholesale price at time } t;\\
p(t)\in [0,\infty) \qquad\qquad\quad &\textrm{unit retail price at time } t;\\
C_0 \quad\qquad\qquad\qquad\quad &\textrm{unit production cost};\\
D(t)\in [0,\infty) \qquad\qquad\quad &\textrm{product demand at time } t;\\
\delta(t)\in (0,\infty) \qquad\qquad\quad &\textrm{innovation effectiveness parameter at time } t;\\
\beta_p(t)\in (0,\infty) \qquad\qquad\quad &\textrm{retail price effectiveness parameter on goodwill at time } t;\\
\beta_w(t)\in (0,\infty) \qquad\qquad\quad &\textrm{wholesale price effectiveness parameter on goodwill at time } t;\\
\beta_x\in (0,\infty) \quad\qquad\qquad\quad &\textrm{decay parameter of goodwill};\\
\alpha \in (0,\infty) \quad\qquad\qquad\quad &\textrm{base market size constant};\\
\gamma_p\in (0,\infty) \quad\qquad\qquad\quad &\textrm{retail price effectiveness parameter on product demand};\\
\gamma_w\in (0,\infty) \quad\qquad\qquad\quad &\textrm{wholesale price effectiveness parameter on product demand};\\
\gamma_x\in (0,\infty) \quad\qquad\qquad\quad &\textrm{goodwill effectiveness parameter on product demand};\\
r\in (0,\infty) \quad\qquad\qquad\quad &\textrm{discount rate}.\\
\end{split}
\end{equation*}
\indent In \cite{SCDL2021}, seller is the leader in dynamic innovation and pricing decisions. Seller first announces his wholesale price $w(t)$ and innovation effort $I_s(t)$ at time $t$. Given the seller's strategies, the buyer acts as a follower to determine his retail price $p(t)$ and innovation effort $I_b(t)$ at time $t$. However, according to \cite{BAS2010, MSS2021, TCCM2021}, we know the buyer is able to act as a leader in pricing decisions when he is a powerful member such as Walmart that has the right to dictate the seller's terms. And the seller is simply the leader in making innovation decisions. Then, based on the leadership strategies $p(t)$ and $I_s(t)$, the seller as a follower gives his wholesale price $w(t)$ and the buyer as a follower gives his innovation effort $I_b(t)$. In this case, we model the dynamic innovation and pricing decision problem as a mixed leadership differential game problem. We can use the theoretic framework developed in section $\ref{section 2}$ to study it. The state of the system is the goodwill of the product in the market at time $t$. Inspired by Tapiero \cite{T1975-1, T1975-2}, we improve the deterministic dynamic model of \cite{SCDL2021} and extend it to stochastic case. We reformulate the evolution of the state as the following SDE:
\begin{equation}\label{practical example state equation}
\left\{
             \begin{aligned}
             dx(t)&=\big[\beta_p(t) p(t)+\beta_w(t) w(t)-\beta_x x(t)+\delta(t)(I_s(t)+I_b(t))\big]dt\\
                  &\quad +\big[\beta_p(t) p(t)+\beta_w(t) w(t)+\beta_x x(t)+\delta(t)(I_s(t)+I_b(t))\big]^{\frac{1}{2}}dW(t),\  0 \le t \le T, \\
              x(0)&=x_0 \ge 0,
             \end{aligned}
\right.
\end{equation}
where $\beta_p(\cdot), \beta_w(\cdot), \delta(\cdot): [0,T] \to (0,\infty)$ are continuous functions and $\beta_x$ is a positive constant. In our context, the goodwill refers to the quality of the product as perceived by the customers. The first two terms $\beta_p(t) p(t)$ and $\beta_w(t) w(t)$ on the right side of $(\ref{practical example state equation})$ represent the effect of retail price and wholesale price on goodwill. It is common for the consumers to associate high retail price and wholesale price with high perceived quality. And the third term $\beta_x x(t)$ reflects the rate at which the goodwill decays over time. The impact of seller's and buyer's innovation efforts on the goodwill of the product is denoted by the last term $\delta(t)(I_s(t)+I_b(t))$. With the increase of innovation efforts, customers' perception of quality is enhanced. We reformulate the classical linear demand function $D(t)=\alpha-\gamma_p p(t)$. Assume that demand decreases linearly with retail price and wholesale price, and increases linearly with the goodwill. Therefore, the dynamic demand function is as follows:
\begin{equation}\label{demand function}
D(t)=\alpha-\gamma_p p(t)-\gamma_w w(t)+\gamma_x x(t).
\end{equation}
The constant $\alpha > 0$ is the potential size of the initial market. The parameters $\gamma_p, \gamma_w, \gamma_x$ of $(\ref{demand function})$ denote the direct effects of retail price, wholesale price, and goodwill on the demand, respectively. We assume that the seller's unit production cost is the constant $C_0$. According to the common assumption in the literature of marginally diminishing returns for innovation expenditures (see for e.g., \cite{DJ1988, G2011, GHX2014}), we assume that the innovation costs are quadratic in innovation effort. Therefore, the innovation expenditures of the seller and buyer at time $t$ are given by $I_s^2(t)$ and $I_b^2(t)$, respectively.\\
\indent The expected profit functional that the seller needs to maximize is
\begin{equation}\label{practical example cost functional of seller}
J_s=\mathbb E \int^T_0 e^{-rt} \big[(w(t)-C_0)D(t)-I_s^2(t)\big] dt,
\end{equation}
where $r$ is the discount rate. $(I_s(\cdot), w(\cdot))\in \mathbb{U}_s \times \mathbb{V}_s =[0,\infty)\times [0,\infty)$ are the seller's control variables. Similarly, the expected profit functional for the buyer to maximize is
\begin{equation}\label{practical example cost functional of buyer}
J_b=\mathbb E \int^T_0 e^{-rt} \big[(p(t)-w(t))D(t)-I_b^2(t)\big] dt,
\end{equation}
where $(p(\cdot), I_b(\cdot))\in \mathbb{U}_b \times \mathbb{V}_b =[0,\infty)\times [0,\infty)$ are the buyer's control variables.\\
\indent The Hamiltonian functions of the seller and buyer are, respectively,
\begin{equation}
\begin{split}
H_s(t,x,I_s,w,p,I_b,y_1,A')&= y_1 \cdot \big[\beta_p(t) p+\beta_w(t) w-\beta_x x+\delta(t)(I_s+I_b)\big]\\
&\quad +\frac{1}{2} \big[\beta_p(t) p+\beta_w(t) w+\beta_x x+\delta(t)(I_s+I_b)\big]A'\\
&\quad +e^{-rt}\big[(w-C_0)(\alpha-\gamma_p p-\gamma_w w+\gamma_x x)-I_s^2\big],
\end{split}
\end{equation}
\begin{equation}
\begin{split}
H_b(t,x,I_s,w,p,I_b,y_2,A'')&= y_2 \cdot \big[\beta_p(t) p+\beta_w(t) w-\beta_x x+\delta(t)(I_s+I_b)\big]\\
&\quad +\frac{1}{2} \big[\beta_p(t) p+\beta_w(t) w+\beta_x x+\delta(t)(I_s+I_b)\big]A''\\
&\quad +e^{-rt}\big[(p-w)(\alpha-\gamma_p p-\gamma_w w+\gamma_x x)-I_b^2\big],
\end{split}
\end{equation}
where $H_s,H_b: [0,T]\times [0,\infty)\times [0,\infty)\times [0,\infty)\times [0,\infty)\times [0,\infty)\times \mathbb{R} \times \mathbb{R} \to \mathbb{R}$. For given $(I_s, p) \in [0,\infty)\times [0,\infty)$, we first consider a static Nash game with strategies $w$ and $I_b$ at the Hamiltonian level. Define
\begin{equation}
\left\{
             \begin{aligned}
             &\Gamma^w(t,x,I_s,p,y_1,y_2,A',A'')\\
             &= \mathop{argmax}\limits_{w \in\, \mathbb{V}_s} H_s\big(t,x,I_s,w,p,\Gamma^{I_b}(t,x,I_s,p,y_1,y_2,A',A''),y_1,A'\big),\\
             &\Gamma^{I_b}(t,x,I_s,p,y_1,y_2,A',A'')\\
             &= \mathop{argmax}\limits_{I_b \in\, \mathbb{V}_b} H_b\big(t,x,I_s,\Gamma^{w}(t,x,I_s,p,y_1,y_2,A',A''),p,I_b,y_2,A''\big).
             \end{aligned}
\right.
\end{equation}
Because of
\begin{equation*}
\begin{split}
&\frac{\partial H_s}{\partial w}\big(t,x,I_s,w,p,\Gamma^{I_b}(t,x,I_s,p,y_1,y_2,A',A''),y_1,A'\big)\\
&=y_1 \beta_w(t)+\frac{1}{2}\beta_w(t)A'+e^{-rt}\big(\alpha-\gamma_p p-2\gamma_w w+\gamma_x x+C_0 \gamma_w\big), \quad t\in[0,T],
\end{split}
\end{equation*}
and
\begin{equation*}
\begin{split}
&\frac{\partial^2 H_s}{\partial w^2}\big(t,x,I_s,w,p,\Gamma^{I_b}(t,x,I_s,p,y_1,y_2,A',A''),y_1,A'\big)\\
&=-2 e^{-rt} \gamma_w <0, \quad t\in[0,T],
\end{split}
\end{equation*}
we can obtain
\begin{equation}\label{retail price response}
\begin{split}
&\Gamma^{w}(t,x,I_s,p,y_1,y_2,A',A'')\\
&=\frac{e^{rt}\big[\beta_w(t) y_1+\frac{1}{2}\beta_w(t)A'\big]+\alpha-\gamma_p p+\gamma_x x+C_0 \gamma_w}{2\gamma_w}, \quad t\in[0,T].
\end{split}
\end{equation}
By performing a similar calculation for $H_b(t,x,I_s,\Gamma^{w}\big(t,x,I_s,p,y_1,y_2,A',A''),p,I_b,y_2,A''\big)$, we can get
\begin{equation}
\Gamma^{I_b}(t,x,I_s,p,y_1,y_2,A',A'')=\frac{e^{rt}\delta(t)(2y_2+A'')}{4}, \quad t\in[0,T].
\end{equation}
\indent Considering the response in the following layer, we substitute $(\Gamma^{w}, \Gamma^{I_b})$ obtained above into $H_s, H_b$ and then play a static Nash game on the leadership strategies $I_s$ and $p$ at the Hamiltonian level. Let
\begin{equation}
\left\{
             \begin{aligned}
             &\Gamma^{I_s}(t,x,y_1,y_2,A',A'')\\
             &= \mathop{argmax}\limits_{I_s \in\, \mathbb{U}_s} H_s\big(t,x,I_s,\Gamma^{w}(t,x,I_s,\Gamma^{p}(t,x,y_1,y_2,A',A''),y_1,y_2,A',A''),\\
             &\qquad \Gamma^{p}(t,x,y_1,y_2,A',A''), \Gamma^{I_b}(t,x,I_s,\Gamma^{p}(t,x,y_1,y_2,A',A''),y_1,y_2,A',A''),y_1,A'\big),\\
             &\Gamma^{p}(t,x,y_1,y_2,A',A'')\\
             &= \mathop{argmax}\limits_{p \in\, \mathbb{U}_b} H_b\big(t,x,\Gamma^{I_s}(t,x,y_1,y_2,A',A''), \Gamma^{w}(t,x,\Gamma^{I_s}(t,x,y_1,y_2,A',A''),p,y_1,y_2,A',A''),\\
             &\qquad p,\Gamma^{I_b}(t,x,\Gamma^{I_s}(t,x,y_1,y_2,A',A''),p,y_1,y_2,A',A''),y_2,A''\big).
             \end{aligned}
\right.
\end{equation}
Similar to the calculation of $(\ref{retail price response})$, we derive
\begin{equation}\label{I-s response}
\begin{split}
\Gamma^{I_s}(t,x,y_1,y_2,A',A'')=\frac{e^{rt}\delta(t)(2y_1+A')}{4},
\end{split}
\end{equation}
\begin{equation}
\begin{split}
&\Gamma^{p}(t,x,y_1,y_2,A',A'')\\
&=\frac{1}{\gamma_p (2\gamma_w+\gamma_p)}\Big[-e^{rt}\gamma_w \beta_w(t) y_1-\frac{1}{2}e^{rt}\gamma_w \beta_w(t)A'+e^{rt}(2\gamma_w \beta_p(t)-\beta_w(t)\gamma_p)y_2\\
&\quad +\frac{1}{2}e^{rt}(2\gamma_w \beta_p(t)-\beta_w(t)\gamma_p)A''+(\gamma_w+\gamma_p)(\alpha+\gamma_x x)-C_0 {\gamma_w}^2\Big], \quad t\in[0,T].
\end{split}
\end{equation}
For the sake of simplicity of notation, we set
\begin{equation*}
\begin{split}
&K_1\triangleq-\frac{e^{rt}\gamma_w \beta_w(t)}{\gamma_p (2\gamma_w+\gamma_p)}, \qquad\quad K_2\triangleq\frac{e^{rt}(2\gamma_w \beta_p(t)-\beta_w(t)\gamma_p)}{\gamma_p (2\gamma_w+\gamma_p)},\\
&K_3\triangleq\frac{(\gamma_w +\gamma_p)\gamma_x}{\gamma_p (2\gamma_w+\gamma_p)}, \qquad\quad K_4\triangleq\frac{(\gamma_w+\gamma_p)\alpha-C_0 {\gamma_w}^2}{\gamma_p (2\gamma_w+\gamma_p)},
\end{split}
\end{equation*}
and derive
\begin{equation}\label{p response}
\begin{split}
\Gamma^{p}(t,x,y_1,y_2,A',A'')=K_1 y_1+\frac{1}{2}K_1 A'+K_2 y_2+\frac{1}{2}K_2 A''+K_3 x+K_4.
\end{split}
\end{equation}
Then, we substitute $(\Gamma^{I_s}, \Gamma^{p})$ into $(\Gamma^{w}, \Gamma^{I_b})$ and obtain
\begin{equation}
\begin{split}
&\Gamma^{w}\big(t,x,\Gamma^{I_s}(t,x,y_1,y_2,A',A''),\Gamma^{p}(t,x,y_1,y_2,A',A''),y_1,y_2,A',A''\big)\\
&=\frac{e^{rt}\beta_w(t)-\gamma_p K_1}{2\gamma_w} \cdot y_1+\frac{e^{rt}\beta_w(t)-\gamma_p K_1}{4\gamma_w} \cdot A'-\frac{\gamma_p K_2}{2\gamma_w} \cdot y_2-\frac{\gamma_p K_2}{4\gamma_w} \cdot A''\\
&\quad +\frac{\gamma_x-\gamma_p K_3}{2\gamma_w} \cdot x+\frac{\alpha+C_0\gamma_w-\gamma_p K_4}{2\gamma_w},
\end{split}
\end{equation}
\begin{equation}\label{I-b response}
\hspace{-7mm}\begin{split}
&\Gamma^{I_b}\big(t,x,\Gamma^{I_s}(t,x,y_1,y_2,A',A''),\Gamma^{p}(t,x,y_1,y_2,A',A''),y_1,y_2,A',A''\big)\\
&=\frac{e^{rt}\delta(t)}{2}\cdot y_2+\frac{e^{rt}\delta(t)}{4}\cdot A'', \quad t\in [0,T].
\end{split}
\end{equation}
Similarly, we define
\begin{equation*}
\begin{split}
&K_5\triangleq\frac{e^{rt}\beta_w(t)-\gamma_p K_1}{2\gamma_w}, \qquad\quad K_6\triangleq-\frac{\gamma_p K_2}{2\gamma_w},\\
&K_7\triangleq\frac{\gamma_x-\gamma_p K_3}{2\gamma_w}, \qquad\quad K_8\triangleq\frac{\alpha+C_0\gamma_w-\gamma_p K_4}{2\gamma_w},
\end{split}
\end{equation*}
and get
\begin{equation}\label{w response}
\begin{split}
&\Gamma^{w}\big(t,x,\Gamma^{I_s}(t,x,y_1,y_2,A',A''),\Gamma^{p}(t,x,y_1,y_2,A',A''),y_1,y_2,A',A''\big)\\
&=K_5 y_1+\frac{1}{2}K_5 A'+K_6 y_2+\frac{1}{2}K_6 A''+K_7 x+K_8.
\end{split}
\end{equation}
By taking
\[y_1=\frac{\partial V_s}{\partial x},\ y_2=\frac{\partial V_b}{\partial x},\ A'=\frac{\partial^2 V_s}{\partial x^2},\ A''=\frac{\partial^2 V_b}{\partial x^2},\]
and using $\Gamma^{I_s}, \Gamma^{w}, \Gamma^{p}, \Gamma^{I_b}$ obtained previously, we derive the HJB equations of the seller and the buyer corresponding to $(\ref{HJB-1})$ and $(\ref{HJB-2})$ in section $\ref{section 2.2}$ as follows respectively:
\begin{equation}\label{HJB of seller}
\begin{split}
&\frac{\partial V_s}{\partial t}(t,x)+\frac{\partial V_s}{\partial x} \bigg(K_9 \frac{\partial V_s}{\partial x}+K_{10} \frac{\partial V_b}{\partial x}+\frac{1}{2}K_9 \frac{\partial^2 V_s}{\partial x^2}+\frac{1}{2}K_{10} \frac{\partial^2 V_b}{\partial x^2}+K_{11} x+K_{12}\bigg)\\
&+\frac{1}{2}\times \frac{\partial^2 V_s}{\partial x^2} \bigg(K_9 \frac{\partial V_s}{\partial x}+K_{10} \frac{\partial V_b}{\partial x}+\frac{1}{2}K_9 \frac{\partial^2 V_s}{\partial x^2}+\frac{1}{2}K_{10} \frac{\partial^2 V_b}{\partial x^2}+K_{11} x+2 \beta_x x+K_{12}\bigg)\\
&+\frac{e^{-rt}}{2} \bigg(K_5 \frac{\partial V_s}{\partial x}+K_{6} \frac{\partial V_b}{\partial x}+\frac{1}{2}K_5 \frac{\partial^2 V_s}{\partial x^2}+\frac{1}{2}K_{6} \frac{\partial^2 V_b}{\partial x^2}+K_7 x+K_8-C_0\bigg)\\
&\quad \times \bigg(K_{13} \frac{\partial V_s}{\partial x}-\gamma_p K_{2} \frac{\partial V_b}{\partial x}+\frac{1}{2}K_{13} \frac{\partial^2 V_s}{\partial x^2}-\frac{1}{2}\gamma_p K_{2} \frac{\partial^2 V_b}{\partial x^2}+K_{14} x+K_{15}\bigg)\\
&-\frac{e^{rt}\delta(t)^2}{16} \Big(2\frac{\partial V_s}{\partial x}+\frac{\partial^2 V_s}{\partial x^2}\Big)^2=0,\quad t\in[0,T],
\end{split}
\end{equation}
with $V_s(T,x)=0$,
\begin{equation}\label{HJB of buyer}
\begin{split}
&\frac{\partial V_b}{\partial t}(t,x)+\frac{\partial V_b}{\partial x} \bigg(K_9 \frac{\partial V_s}{\partial x}+K_{10} \frac{\partial V_b}{\partial x}+\frac{1}{2}K_9 \frac{\partial^2 V_s}{\partial x^2}+\frac{1}{2}K_{10} \frac{\partial^2 V_b}{\partial x^2}+K_{11} x+K_{12}\bigg)\\
&+\frac{1}{2}\times \frac{\partial^2 V_b}{\partial x^2} \bigg(K_9 \frac{\partial V_s}{\partial x}+K_{10} \frac{\partial V_b}{\partial x}+\frac{1}{2}K_9 \frac{\partial^2 V_s}{\partial x^2}+\frac{1}{2}K_{10} \frac{\partial^2 V_b}{\partial x^2}+K_{11} x+2 \beta_x x+K_{12}\bigg)\\
&+\frac{e^{-rt}}{2} \bigg(K_{16} \frac{\partial V_s}{\partial x}+\big(1+\frac{\gamma_p}{2 \gamma_w}\big)K_{2} \frac{\partial V_b}{\partial x}+\frac{1}{2}K_{16} \frac{\partial^2 V_s}{\partial x^2}+\frac{1}{2}\big(1+\frac{\gamma_p}{2 \gamma_w}\big)K_{2} \frac{\partial^2 V_b}{\partial x^2}+K_{17} x\\
&\quad +K_{18}\bigg)\bigg(K_{13} \frac{\partial V_s}{\partial x}-\gamma_p K_{2} \frac{\partial V_b}{\partial x}+\frac{1}{2}K_{13} \frac{\partial^2 V_s}{\partial x^2}-\frac{1}{2}\gamma_p K_{2} \frac{\partial^2 V_b}{\partial x^2}+K_{14} x+K_{15}\bigg)\\
&-\frac{e^{rt}\delta(t)^2}{16} \Big(2 \frac{\partial V_b}{\partial x}+\frac{\partial^2 V_b}{\partial x^2}\Big)^2=0,\quad t\in[0,T],
\end{split}
\end{equation}
with $V_b(T,x)=0,$ where
\begin{equation*}
\begin{split}
&K_9\triangleq\beta_p(t)K_1-\frac{\gamma_p K_1 \beta_w(t)}{2\gamma_w}+\frac{e^{rt}{\beta_w(t)}^2}{2\gamma_w}+\frac{e^{rt} {\delta(t)}^2}{2},\\
&K_{10}\triangleq\beta_p(t)K_2-\frac{\gamma_p K_2 \beta_w(t)}{2\gamma_w}+\frac{e^{rt} {\delta(t)}^2}{2},\\
&K_{11}\triangleq\beta_p(t)K_3-\frac{\gamma_p K_3 \beta_w(t)}{2\gamma_w}+\frac{\gamma_x \beta_w(t)}{2\gamma_w}-\beta_x,\\
&K_{12}\triangleq\beta_p(t)K_4-\frac{\gamma_p K_4 \beta_w(t)}{2\gamma_w}+\frac{\beta_w(t)(\alpha+C_0 \gamma_w)}{2\gamma_w},\\
&K_{13}\triangleq-\gamma_p K_1-e^{rt} \beta_w(t),\quad K_{14}\triangleq\gamma_x-\gamma_p K_3,\\
&K_{15}\triangleq\alpha-C_0 \gamma_w-\gamma_p K_4,\quad K_{16}\triangleq\big(1+\frac{\gamma_p}{2 \gamma_w}\big)K_1-\frac{e^{rt} \beta_w(t)}{2 \gamma_w},\\
&K_{17}\triangleq\big(1+\frac{\gamma_p}{2 \gamma_w}\big)K_3-\frac{\gamma_x}{2 \gamma_w},\quad K_{18}\triangleq\big(1+\frac{\gamma_p}{2 \gamma_w}\big)K_4-\frac{\alpha+C_0 \gamma_w}{2 \gamma_w}.\\
\end{split}
\end{equation*}
\indent After some observations, we guess solutions of the following forms:
\begin{equation}\label{guess solution form}
\begin{split}
&V_s(t,x)=P_2(t)x^2+P_1(t)x+P_0(t),\\
&V_b(t,x)=N_2(t)x^2+N_1(t)x+N_0(t), \quad t\in [0,T],
\end{split}
\end{equation}
where $P_2(\cdot), P_1(\cdot), P_0(\cdot), N_2(\cdot), N_1(\cdot), N_0(\cdot)$ are continuously differentiable functions on $[0,T]$ with $P_2(T)=P_1(T)=P_0(T)=N_2(T)=N_1(T)=N_0(T)=0$.\\
\indent We substitute $(\ref{guess solution form})$ into $(\ref{HJB of seller})$ and $(\ref{HJB of buyer})$, and compare the quadratic, first, and constant terms of $x$. Then the following coupled Riccati equations are obtained:
\begin{equation}\label{riccati P-2}
\begin{split}
&\dot{P}_2(t)+2P_2(t) \Big(2K_9 P_2(t)+2K_{10} N_2(t)+K_{11}\Big)\\
&+\frac{e^{-rt}}{2}\Big(2K_5 P_2(t)+2K_{6} N_2(t)+K_{7}\Big)\times \Big(2K_{13} P_2(t)-2\gamma_p K_{2} N_2(t)+K_{14}\Big)\\
&-e^{rt}\delta(t)^2 P_2(t)^2=0,\quad t\in[0,T],
\end{split}
\end{equation}
with $P_2(T)=0$;
\begin{equation*}
\begin{split}
&\dot{P}_1(t)+2P_2(t) \Big(K_9 P_1(t)+K_{10} N_1(t)+K_9 P_2(t)+K_{10} N_2(t)+K_{12}\Big)\\
&+P_1(t) \Big(2K_9 P_2(t)+2K_{10} N_2(t)+K_{11}\Big)\\
&+P_2(t) \Big(2K_9 P_2(t)+2K_{10} N_2(t)+K_{11}+2 \beta_x\Big)\\
&+\frac{e^{-rt}}{2} \Big(2 K_5 P_2(t)+2 K_6 N_2(t)+K_7\Big)\\
&\quad \times \Big(K_{13} P_1(t)-\gamma_p K_2 N_1(t)+K_{13} P_2(t)-\gamma_p K_2 N_2(t)+K_{15}\Big)\\
\end{split}
\end{equation*}
\begin{equation}\label{riccati P-1}
\begin{split}
&+\frac{e^{-rt}}{2} \Big(K_5 P_1(t)+K_6 N_1(t)+K_5 P_2(t)+K_6 N_2(t)+K_8-C_0\Big)\\
&\quad \times \Big(2 K_{13} P_2(t)-2\gamma_p K_2 N_2(t)+K_{14}\Big)\\
&-e^{rt}\delta(t)^2 P_2(t)\big(P_1(t)+P_2(t)\big)=0,\quad t\in[0,T],
\end{split}
\end{equation}
with $P_1(T)=0$;
\begin{equation}\label{riccati P-0}
\begin{split}
&\dot{P}_0(t)+P_1(t) \Big(K_9 P_1(t)+K_{10} N_1(t)+K_9 P_2(t)+K_{10} N_2(t)+K_{12}\Big)\\
&+P_2(t) \Big(K_9 P_1(t)+K_{10} N_1(t)+K_9 P_2(t)+K_{10} N_2(t)+K_{12}\Big)\\
&+\frac{e^{-rt}}{2} \Big(K_5 P_1(t)+K_6 N_1(t)+K_5 P_2(t)+K_6 N_2(t)+K_8-C_0\Big)\\
&\quad \times \Big(K_{13} P_1(t)-\gamma_p K_2 N_1(t)+K_{13} P_2(t)-\gamma_p K_2 N_2(t)+K_{15}\Big)\\
&-\frac{e^{rt}\delta(t)^2}{4}\big(P_1(t)+P_2(t)\big)^2=0,\quad t\in[0,T],
\end{split}
\end{equation}
with $P_0(T)=0$;
\begin{equation}\label{riccati N-2}
\begin{split}
&\dot{N}_2(t)+2N_2(t) \Big(2K_9 P_2(t)+2K_{10} N_2(t)+K_{11}\Big)\\
&+\frac{e^{-rt}}{2} \Big(2K_{16} P_2(t)+2 \big(1+\frac{\gamma_p}{2\gamma_w}\big)K_2 N_2(t)+K_{17}\Big)\\
&\quad \times \Big(2K_{13} P_2(t)-2\gamma_p K_2 N_2(t)+K_{14}\Big)-e^{rt}\delta(t)^2 N_2(t)^2=0,\quad t\in[0,T],
\end{split}
\end{equation}
with $N_2(T)=0$;
\begin{equation}\label{riccati N-1}
\begin{split}
&\dot{N}_1(t)+2N_2(t) \Big(K_9 P_1(t)+K_{10} N_1(t)+K_9 P_2(t)+K_{10} N_2(t)+K_{12}\Big)\\
&+N_1(t) \Big(2K_9 P_2(t)+2K_{10} N_2(t)+K_{11}\Big)\\
&+N_2(t) \Big(2K_9 P_2(t)+2K_{10} N_2(t)+K_{11}+2 \beta_x\Big)\\
&+\frac{e^{-rt}}{2} \Big(2 K_{16} P_2(t)+2 \big(1+\frac{\gamma_p}{2\gamma_w}\big)K_2N_2(t)+K_{17}\Big)\\
&\quad \times \Big(K_{13} P_1(t)-\gamma_p K_2 N_1(t)+K_{13} P_2(t)-\gamma_p K_2 N_2(t)+K_{15}\Big)\\
&+\frac{e^{-rt}}{2} \Big(K_{16} P_1(t)+\big(1+\frac{\gamma_p}{2\gamma_w}\big)K_2 N_1(t)+K_{16} P_2(t)+\big(1+\frac{\gamma_p}{2\gamma_w}\big)K_2 N_2(t)+K_{18}\Big)\\
&\quad \times \Big(2 K_{13} P_2(t)-2\gamma_p K_2 N_2(t)+K_{14}\Big)\\
&-e^{rt}\delta(t)^2 N_2(t)\big(N_1(t)+N_2(t)\big)=0,\quad t\in[0,T],
\end{split}
\end{equation}
with $N_1(T)=0$;
\begin{equation}\label{riccati N-0}
\begin{split}
&\dot{N}_0(t)+N_1(t)\Big(K_9 P_1(t)+K_{10} N_1(t)+K_9 P_2(t)+K_{10} N_2(t)+K_{12}\Big)\\
&+N_2(t)\Big(K_9 P_1(t)+K_{10} N_1(t)+K_9 P_2(t)+K_{10} N_2(t)+K_{12}\Big)\\
&+\frac{e^{-rt}}{2}\Big(K_{16} P_1(t)+\big(1+\frac{\gamma_p}{2\gamma_w}\big)K_2 N_1(t)+K_{16} P_2(t)+\big(1+\frac{\gamma_p}{2\gamma_w}\big)K_2 N_2(t)+K_{18}\Big)\\
&\quad \times \Big(K_{13} P_1(t)-\gamma_p K_2 N_1(t)+K_{13} P_2(t)-\gamma_p K_2 N_2(t)+K_{15}\Big)\\
&-\frac{e^{rt}\delta(t)^2}{4} \big(N_1(t)+N_2(t)\big)^2=0,\quad t\in[0,T],
\end{split}
\end{equation}
with $N_0(T)=0$.\\
\indent According to the verification theorem in section $\ref{section 2.2}$ and previously obtained $\Gamma^{I_s}, \Gamma^{p}, \Gamma^{w}, \Gamma^{I_b}$, namely, $(\ref{I-s response}), (\ref{p response}), (\ref{w response}), (\ref{I-b response})$, we can use the solutions of the coupled Riccati equations $(\ref{riccati P-2})$-$(\ref{riccati N-0})$ to get the feedback Stackelberg-Nash equilibrium.

\begin{mythm}
A feedback Stackelberg-Nash equilibrium is derived in which the seller's wholesale price and innovation effort strategies are given by the following forms
\begin{equation}\label{explicit feedback equilibrium of the seller}
\begin{split}
&w^*=\Big(2K_5P_2(t)+2K_6N_2(t)+K_7\Big)x\\
&\qquad +K_5P_1(t)+K_5P_2(t)+K_6N_1(t)+K_6N_2(t)+K_8,\\
&I_s^*=\frac{e^{rt}\delta(t)}{2}\Big(2P_2(t)x+P_1(t)+P_2(t)\Big), \quad t\in [0,T],
\end{split}
\end{equation}
and the retail price and innovation effort strategies of the buyer are given by the following forms
\begin{equation}\label{explicit feedback equilibrium of the buyer}
\begin{split}
&p^*=\Big(2K_1P_2(t)+2K_2N_2(t)+K_3\Big)x\\
&\qquad +K_1P_1(t)+K_1P_2(t)+K_2N_1(t)+K_2N_2(t)+K_4,\\
&I_b^*=\frac{e^{rt}\delta(t)}{2} \Big(2N_2(t)x+N_1(t)+N_2(t)\Big), \quad t\in [0,T].
\end{split}
\end{equation}
The value functions of the two players are quadratic in the state variable and can be written in the form $(\ref{guess solution form})$, where the coefficients $P_2(\cdot), P_1(\cdot), P_0(\cdot), N_2(\cdot), N_1(\cdot), N_0(\cdot)$ are the solutions of coupled Riccati equations $(\ref{riccati P-2})$-$(\ref{riccati N-0})$ and depend on model parameters.
\end{mythm}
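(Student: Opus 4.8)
The plan is to prove this by verification: I would show that the quadratic ansatz (\ref{guess solution form}), with coefficients obeying the coupled Riccati system (\ref{riccati P-2})--(\ref{riccati N-0}), genuinely solves the HJB system (\ref{HJB of seller})--(\ref{HJB of buyer}), and then invoke the verification theorem of Section \ref{section 2.2} to promote the induced feedback strategies to a feedback Stackelberg-Nash equilibrium. The feedback responses $\Gamma^{I_s},\Gamma^{p},\Gamma^{w},\Gamma^{I_b}$ have already been constructed layer by layer via first-order conditions together with the strict concavity checks (e.g. $\partial^2 H_s/\partial w^2=-2e^{-rt}\gamma_w<0$), so the two nested static Nash games admit the required unique solutions; what remains is to identify the value functions and assemble the pieces.

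First I would differentiate (\ref{guess solution form}) to record $\frac{\partial V_s}{\partial x}=2P_2(t)x+P_1(t)$, $\frac{\partial^2 V_s}{\partial x^2}=2P_2(t)$, and analogously $\frac{\partial V_b}{\partial x}=2N_2(t)x+N_1(t)$, $\frac{\partial^2 V_b}{\partial x^2}=2N_2(t)$, together with $\frac{\partial V_s}{\partial t}=\dot P_2 x^2+\dot P_1 x+\dot P_0$ and its buyer analogue. Substituting these into the left-hand sides of (\ref{HJB of seller}) and (\ref{HJB of buyer}) makes each side a polynomial in $x$ of degree at most two. Collecting the coefficients of $x^2$, $x^1$ and $x^0$ and setting each to zero is precisely the passage that yields (\ref{riccati P-2})--(\ref{riccati P-0}) from the seller's equation and (\ref{riccati N-2})--(\ref{riccati N-0}) from the buyer's equation; conversely, if $P_2,P_1,P_0,N_2,N_1,N_0$ solve this system with terminal data $P_2(T)=\cdots=N_0(T)=0$, then all three polynomial coefficients vanish identically on $[0,T]$, so $V_s(T,x)=V_b(T,x)=0$ and the HJB equations hold for every $(t,x)$. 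This coefficient-matching is the computational heart of the argument and the place where the main bookkeeping effort lies.

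Next I would verify the regularity and admissibility demanded by the verification theorem. Since each $K_i$ is assembled from the continuous data $\beta_p(\cdot),\beta_w(\cdot),\delta(\cdot)$ and fixed positive constants, and since Picard-Lindel\"of (\cite{AO2008}) supplies a $C^1$ local solution of the Riccati system on an interval terminating at $T$, the functions $V_s,V_b$ lie in $C^{1,2}([0,T]\times\mathbb{R})\cap C_p([0,T]\times\mathbb{R})$ with $p=2$, their quadratic-in-$x$ form giving exactly the polynomial growth required. Inserting the above derivatives into the response maps (\ref{I-s response}), (\ref{p response}), (\ref{w response}), (\ref{I-b response}) produces strategies that are affine in $x$ with coefficients continuous in $t$, hence uniformly locally Lipschitz and of linear growth, so the pairs $(I_s^*,w^*)$ and $(p^*,I_b^*)$ belong to $\mathcal{U}_s\times\mathcal{V}_s$ and $\mathcal{U}_b\times\mathcal{V}_b$.

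Finally I would carry out the substitution explicitly. With $y_1=2P_2x+P_1$, $A'=2P_2$, $y_2=2N_2x+N_1$, $A''=2N_2$, the identity $2y_1+A'=2(2P_2x+P_1+P_2)$ collapses (\ref{I-s response}) into the claimed $I_s^*$, and the analogous simplifications turn (\ref{p response}), (\ref{w response}), (\ref{I-b response}) into $p^*,w^*,I_b^*$ of (\ref{explicit feedback equilibrium of the seller})--(\ref{explicit feedback equilibrium of the buyer}). Since $(V_s,V_b)$ solves the HJB system and the induced strategies are exactly the argmaxes defining $u_i^*,v_i^*$ in the verification theorem, that theorem applies verbatim and certifies $\{(I_s^*,w^*),(p^*,I_b^*)\}$ as a feedback Stackelberg-Nash equilibrium with value functions $V_s,V_b$. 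The one point requiring care beyond routine algebra is the constraint $\mathbb{U}_s=\mathbb{V}_s=\mathbb{U}_b=\mathbb{V}_b=[0,\infty)$: the interior maximizers used above coincide with the constrained optima only when the resulting $I_s^*,w^*,p^*,I_b^*$ remain nonnegative along the trajectory, which holds for parameter ranges keeping the Riccati coefficients in the appropriate sign regime.
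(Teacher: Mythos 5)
Your proposal is correct and follows essentially the same route as the paper: the paper's justification for this theorem is precisely the preceding derivation (layer-by-layer construction of $\Gamma^{w},\Gamma^{I_b},\Gamma^{I_s},\Gamma^{p}$ via first-order conditions with concavity checks, substitution of the quadratic ansatz into the HJB system, coefficient matching to obtain the Riccati equations, and then evaluation of the response maps at $y_1=2P_2x+P_1$, $A'=2P_2$, $y_2=2N_2x+N_1$, $A''=2N_2$), capped by an appeal to the verification theorem of Section \ref{section 2.2}. Your added admissibility check and the caveat about the nonnegativity constraints match what the paper handles implicitly and in the Remark following the theorem (truncation to $\max(\cdot,0)$), so no new gap is introduced beyond the one the paper itself carries, namely that Picard--Lindel\"of only guarantees the Riccati solutions on a subinterval while the verification theorem needs $V_s,V_b$ on all of $[0,T]$.
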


\begin{Remark}
Since the decisions in this supply chain problem should be nonnegative, the feedback equilibrium decisions of innovation and pricing which are finally applied to practice can be expressed as $max(w^*,0)$, $max(I_s^*,0)$, $max(p^*,0)$, and $max(I_b^*,0)$.
\end{Remark}

\indent Next, we will discuss the existence and uniqueness of the solutions to the coupled Riccati equations $(\ref{riccati P-2})$-$(\ref{riccati N-0})$. First, we consider the coupled Riccati system consisting of $(\ref{riccati P-2})$ and $(\ref{riccati N-2})$, which does not include $P_1(\cdot), P_0(\cdot), N_1(\cdot), N_0(\cdot)$. After calculation, equations $(\ref{riccati P-2})$ and $(\ref{riccati N-2})$ can be abbreviated as follows:
\begin{equation}\label{the abbreviation of riccati P-2}
\left\{
             \begin{aligned}
             &\dot{P}_2(t)+\Phi_1(t)P_2(t)^2+\Phi_2(t)N_2(t)^2+\Phi_3(t)P_2(t)N_2(t)\\
             &+\Phi_4(t)P_2(t)+\Phi_5(t)N_2(t)+\Phi_6(t)=0,\quad t\in[0,T],\\
             &P_2(T)=0;
             \end{aligned}
\right.
\end{equation}
\begin{equation}\label{the abbreviation of riccati N-2}
\left\{
             \begin{aligned}
             &\dot{N}_2(t)+\Psi_1(t)P_2(t)^2+\Psi_2(t)N_2(t)^2+\Psi_3(t)P_2(t)N_2(t)\\
             &+\Psi_4(t)P_2(t)+\Psi_5(t)N_2(t)+\Psi_6(t)=0,\quad t\in[0,T],\\
             &N_2(T)=0;
             \end{aligned}
\right.
\end{equation}
where
\begin{equation*}
\begin{split}
&\Phi_1(t)\triangleq4K_9+2e^{-rt}K_5K_{13}-e^{rt}{\delta(t)}^2,\quad \Phi_2(t)\triangleq-2e^{-rt}\gamma_p K_2 K_6,\\
&\Phi_3(t)\triangleq4K_{10}-2e^{-rt}\gamma_p K_2 K_5+2e^{-rt}K_6K_{13},\\
&\Phi_4(t)\triangleq2K_{11}+e^{-rt}K_5K_{14}+e^{-rt}K_7K_{13},\\
&\Phi_5(t)\triangleq e^{-rt}K_6K_{14}-e^{-rt}\gamma_p K_2K_7,\quad \Phi_6(t)\triangleq\frac{e^{-rt}}{2}K_7K_{14},\\
&\Psi_1(t)\triangleq2e^{-rt}K_{13}K_{16},\quad \Psi_2(t)\triangleq4K_{10}-2e^{-rt}\big(1+\frac{\gamma_p}{2\gamma_w}\big)\gamma_p {K_2}^2-e^{rt}{\delta(t)}^2,\\
&\Psi_3(t)\triangleq4K_9-2e^{-rt}\gamma_pK_2K_{16}+2e^{-rt}\big(1+\frac{\gamma_p}{2\gamma_w}\big) K_2K_{13},\\
&\Psi_4(t)\triangleq e^{-rt}K_{14}K_{16}+e^{-rt}K_{13}K_{17},\\
&\Psi_5(t)\triangleq2K_{11}+e^{-rt}\big(1+\frac{\gamma_p}{2\gamma_w}\big) K_2K_{14}-e^{-rt}\gamma_pK_2K_{17},\quad \Psi_6(t)\triangleq\frac{e^{-rt}}{2}K_{14}K_{17}.\\
\end{split}
\end{equation*}
By setting
\begin{equation}
\begin{split}
\widetilde{P}_2(t)=P_2(T-t),\quad \widetilde{N}_2(t)=N_2(T-t),\quad t\in[0,T],
\end{split}
\end{equation}
we get $(\ref{the abbreviation of riccati P-2})$ and $(\ref{the abbreviation of riccati N-2})$ are equivalent to
\begin{equation}\label{riccati tilde P-2}
\left\{
             \begin{aligned}
             &\dot{\widetilde{P}}_2(t)=\widetilde{\Phi}_1(t)\widetilde{P}_2(t)^2+\widetilde{\Phi}_2(t)\widetilde{N}_2(t)^2+\widetilde{\Phi}_3(t)\widetilde{P}_2(t)\widetilde{N}_2(t)
             +\widetilde{\Phi}_4(t)\widetilde{P}_2(t)\\
             & +\widetilde{\Phi}_5(t)\widetilde{N}_2(t)+\widetilde{\Phi}_6(t),\quad t\in[0,T],\\
             &\widetilde{P}_2(0)=0;
             \end{aligned}
\right.
\end{equation}
\begin{equation}\label{riccati tilde N-2}
\left\{
             \begin{aligned}
             &\dot{\widetilde{N}}_2(t)=\widetilde{\Psi}_1(t)\widetilde{P}_2(t)^2+\widetilde{\Psi}_2(t)\widetilde{N}_2(t)^2+\widetilde{\Psi}_3(t)\widetilde{P}_2(t)\widetilde{N}_2(t)
             +\widetilde{\Psi}_4(t)\widetilde{P}_2(t)\\
             & +\widetilde{\Psi}_5(t)\widetilde{N}_2(t)+\widetilde{\Psi}_6(t),\quad t\in[0,T],\\
             &\widetilde{N}_2(0)=0;
             \end{aligned}
\right.
\end{equation}
where
\[\widetilde{\Phi}_i(t)=\Phi_i(T-t),\qquad \widetilde{\Psi}_i(t)=\Psi_i(T-t), \quad t\in[0,T].\]
Let
\begin{equation}
\begin{split}
M(t)\triangleq\left( \begin{array}{ccc}
\widetilde{P}_2(t)\\
\widetilde{N}_2(t)\\
\end{array} \right), \quad t\in [0,T],
\end{split}
\end{equation}
and $(\ref{riccati tilde P-2}), (\ref{riccati tilde N-2})$ can be written as
\begin{equation}\label{equation M}
\begin{split}
\dot{M}(t)=G(t,M(t)), \quad t\in [0,T],
\end{split}
\end{equation}
where
\begin{equation*}
\begin{split}
&G(t,M(t))\triangleq\left( \begin{array}{ccc}
G_1(t,M(t))\\
G_2(t,M(t))\\
\end{array} \right),\\
&G_1(t,M(t))\triangleq\widetilde{\Phi}_1(t)M(t)^\top M(t)+\left( \begin{array}{ccc}
\widetilde{\Phi}_4(t) & \widetilde{\Phi}_5(t)
\end{array} \right)M(t)\\
&\qquad\qquad\qquad+\left( \begin{array}{ccc}
0 & 1
\end{array} \right)M(t)M(t)^\top \left( \begin{array}{ccc}
\widetilde{\Phi}_3(t)\\
\widetilde{\Phi}_2(t)-\widetilde{\Phi}_1(t)\\
\end{array} \right)+\widetilde{\Phi}_6(t),\\
&G_2(t,M(t))\triangleq\widetilde{\Psi}_1(t)M(t)^\top M(t)+\left( \begin{array}{ccc}
\widetilde{\Psi}_4(t) & \widetilde{\Psi}_5(t)
\end{array} \right)M(t)\\
&\qquad\qquad\qquad+\left( \begin{array}{ccc}
0 & 1
\end{array} \right)M(t)M(t)^\top \left( \begin{array}{ccc}
\widetilde{\Psi}_3(t)\\
\widetilde{\Psi}_2(t)-\widetilde{\Psi}_1(t)\\
\end{array} \right)+\widetilde{\Psi}_6(t),\\
\end{split}
\end{equation*}
with
\begin{equation}\label{the initial value of M}
\begin{split}
M(0)=\left( \begin{array}{ccc}
0\\
0\\
\end{array} \right).
\end{split}
\end{equation}
\indent Since $\beta_p(t), \beta_w(t), \delta(t)$ are continuous functions on $[0,T]$, then $G(t,M):[0,T]\times \mathbb{R}^2 \to \mathbb{R}^2$ is continuous in $t$ and uniformly locally Lipschitz continuous in $M$. According to Picard-Lindel{\"o}f's Theorem (See, for example, Theorem 15.2 in Agarwal and O'Regan \cite{AO2008}), we know that there exists an interval $[0,\eta]\subseteq [0,T]$ such that the initial value problem $(\ref{equation M})$ and $(\ref{the initial value of M})$ has a unique solution on $[0,\eta]$. \\
\indent When the solutions of the coupled Riccati equations $(\ref{riccati P-2})$ and $(\ref{riccati N-2})$ are known, equations $(\ref{riccati P-1})$ and $(\ref{riccati N-1})$ become a system of linear ODEs with respect to $P_1(\cdot)$ and $N_1(\cdot)$. Since $\beta_p(t), \beta_w(t), \delta(t)$ are continuous on $[0,T]$, there exists a unique solution $(P_1(\cdot), N_1(\cdot))$ to equations $(\ref{riccati P-1})$ and $(\ref{riccati N-1})$. Finally, substituting the solutions of equations $(\ref{riccati P-2}), (\ref{riccati P-1}), (\ref{riccati N-2}),$ and $(\ref{riccati N-1})$ into $(\ref{riccati P-0})$ and $(\ref{riccati N-0})$, $P_0(\cdot)$ and $N_0(\cdot)$ can be solved easily.

\section{Numerical analysis}\label{section 4}

\hspace{0.4cm} Since explicit solutions to the system of coupled Riccati equations $(\ref{riccati P-2})$-$(\ref{riccati N-0})$ are difficult to obtain in general, a numerical analysis is performed to understand the dependence of decision variables on representative model parameters $C_0$ and $\delta(t)$. We fix the values of all parameters except one parameter of interest, and then vary the value of that parameter to see how the feedback Stackelberg-Nash equilibrium varies with that parameter given any goodwill level $x$. According to $(\ref{explicit feedback equilibrium of the seller})$ and $(\ref{explicit feedback equilibrium of the buyer})$, we know that the feedback Stackelberg-Nash equilibrium $w^*, I_s^*, p^*$ and $I_b^*$ are linear in state $x$. We denote $w_{x}^*=\frac{\partial w^*}{\partial x}, I_{sx}^*=\frac{\partial I_s^*}{\partial x}, p_{x}^*=\frac{\partial p^*}{\partial x}, I_{bx}^*=\frac{\partial I_b^*}{\partial x}$. At the same time, $w_{0}^*, I_{s0}^*, p_{0}^*$ and $I_{b0}^*$ denote constant terms in the optimal policies $w^*, I_s^*, p^*$ and $I_b^*$, respectively. Therefore, the feedback Stackelberg-Nash equilibrium can be written as $w^*=w_{x}^* \cdot x+w_{0}^*, I_s^*=I_{sx}^* \cdot x+I_{s0}^*, p^*=p_{x}^* \cdot x+p_{0}^*, I_b^*=I_{bx}^* \cdot x+I_{b0}^*$, where the values of $w_{x}^*, I_{sx}^*, p_{x}^*, I_{bx}^*, w_{0}^*, I_{s0}^*, p_{0}^*, I_{b0}^*$ only depend on the model parameters and time variable $t$.

Figures $\ref{fig:C0-w}, \ref{fig:C0-p}, \ref{fig:C0-Is}, \ref{fig:C0-Ib}$ shows the changes of these coefficients when parameters $\beta_p(t)=0.1, \beta_w(t)=0.2, \beta_x=0.1, \delta(t)=0.1, \gamma_p=0.1, \gamma_w=0.0001, \gamma_x=0.1, \alpha=1, T=1, r=0.05$ are fixed for any $t\in[0,T]$, and unit production cost $C_0$ takes different values in $[1, 1.5, 2, 2.5]$. According to Figures $\ref{fig:C0-w}, \ref{fig:C0-p}, \ref{fig:C0-Is}, \ref{fig:C0-Ib}$, it can be seen that $w_{x}^*, p_{x}^*, I_{sx}^*, I_{bx}^*$ don't change as unit production cost $C_0$ increases. Therefore, with unit production cost $C_0$ increasing, the changes of $w_{0}^*, p_{0}^*, I_{s0}^*, I_{b0}^*$ respectively represent the corresponding changes of $w^*, p^*, I_s^*, I_b^*$.

\begin{figure}[H]
\centering

\subfigure[The impact of $C_0$ on $w^*_{x}$]
{
    \begin{minipage}{7cm}
    \centering
    \includegraphics[scale=0.4]{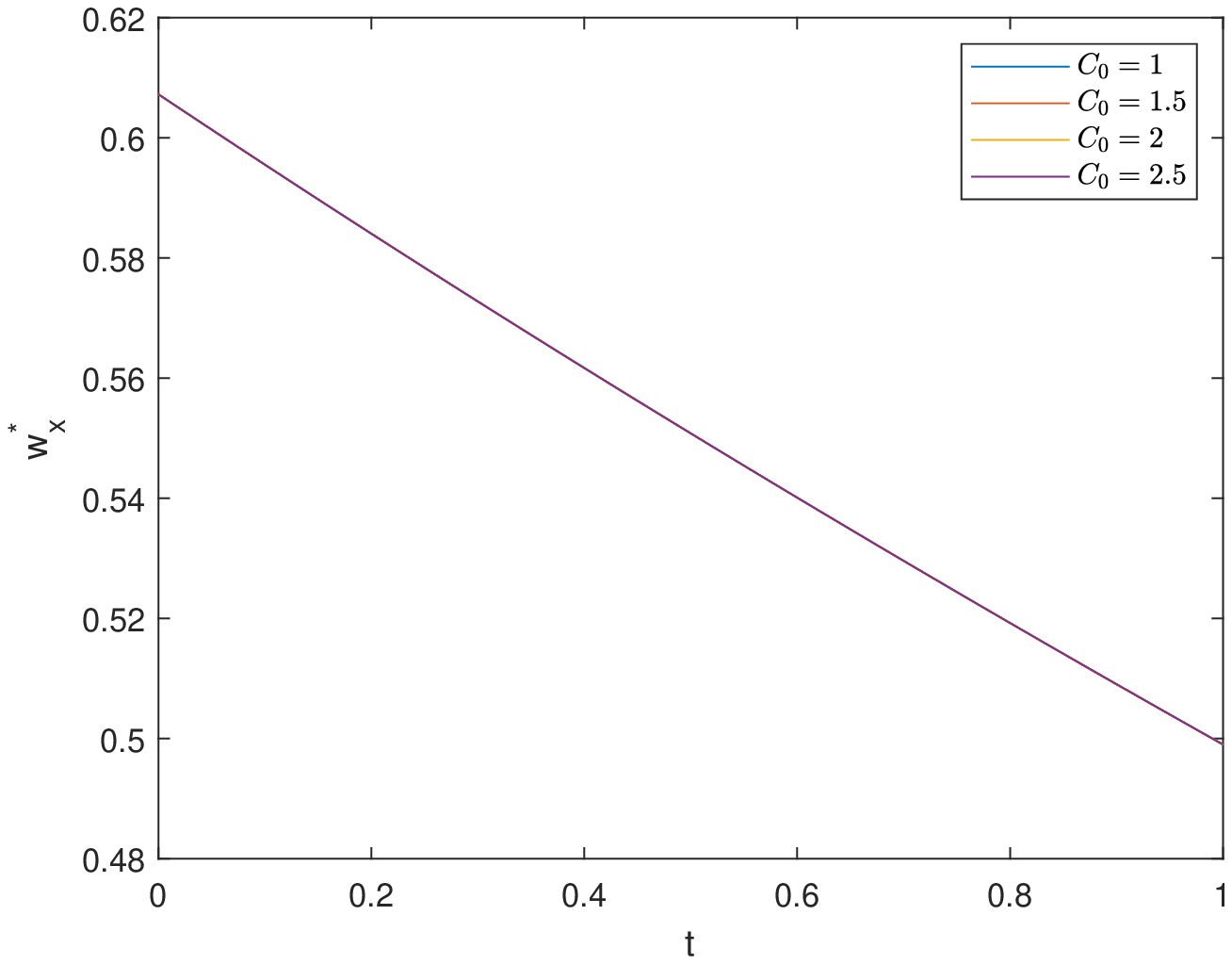}
    \end{minipage}
}
\subfigure[The impact of $C_0$ on $w^*_{0}$]
{
    \begin{minipage}{7cm}
    \centering
    \includegraphics[scale=0.4]{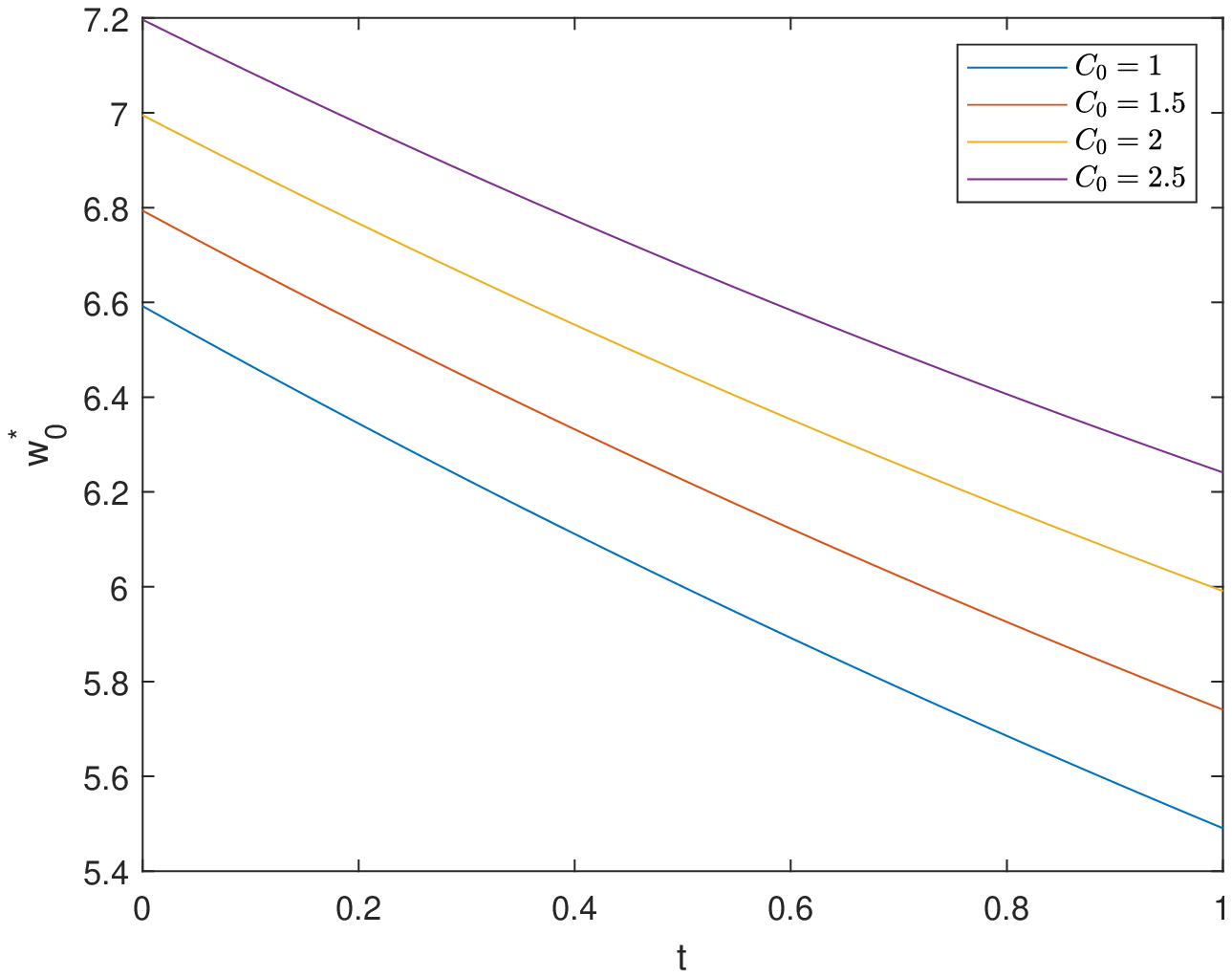}
    \end{minipage}
}
\caption{The impact of $C_0$ on $w^*$}
\label{fig:C0-w}
\end{figure}

\begin{figure}[H]
\centering

\subfigure[The impact of $C_0$ on $p^*_{x}$]
{
    \begin{minipage}{7cm}
    \centering
    \includegraphics[scale=0.4]{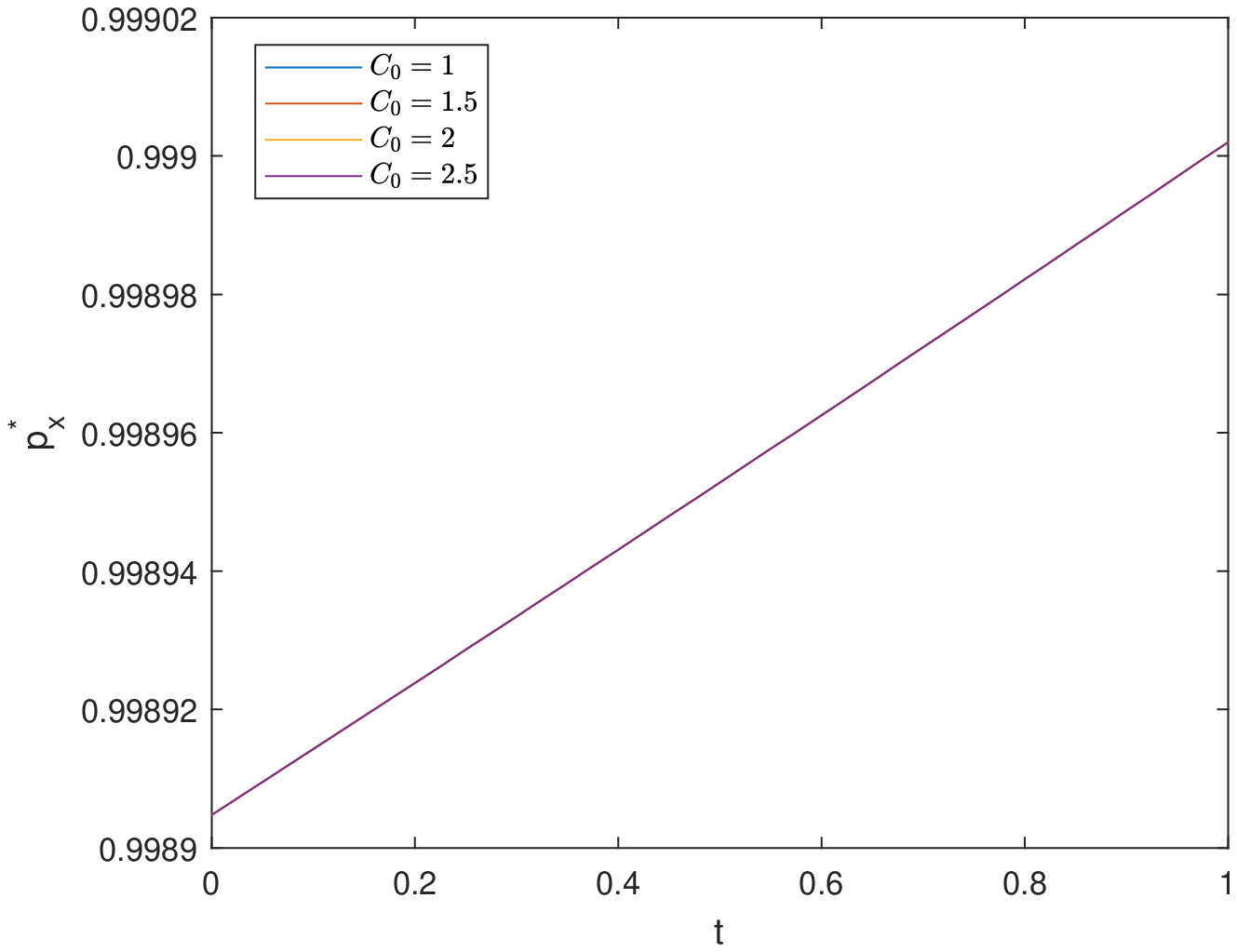}
    \end{minipage}
}
\subfigure[The impact of $C_0$ on $p^*_{0}$]
{
    \begin{minipage}{7cm}
    \centering
    \includegraphics[scale=0.4]{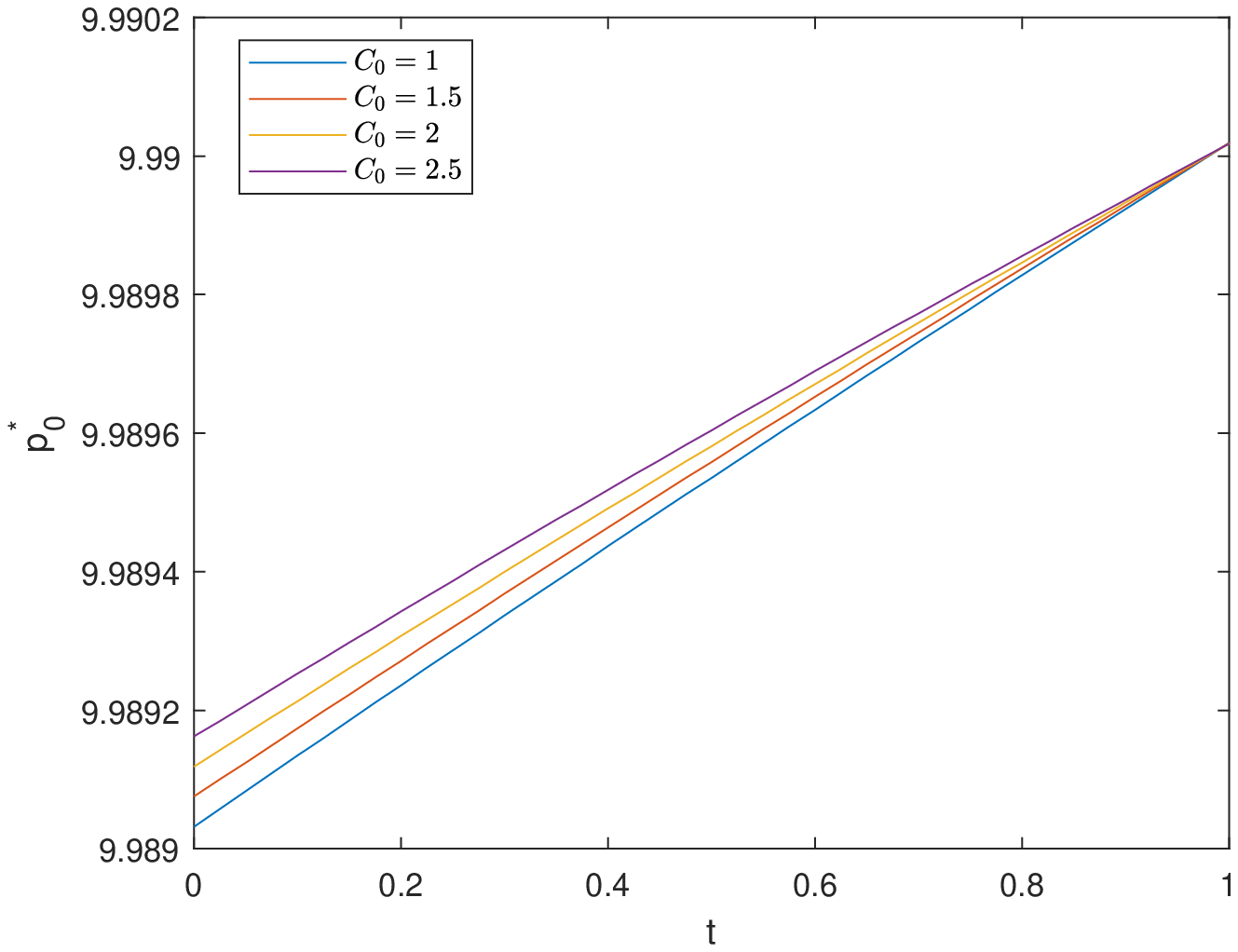}
    \end{minipage}
}
\caption{The impact of $C_0$ on $p^*$}
\label{fig:C0-p}
\end{figure}

\begin{figure}[H]
\centering

\subfigure[The impact of $C_0$ on $I^*_{sx}$]
{
    \begin{minipage}{7cm}
    \centering
    \includegraphics[scale=0.4]{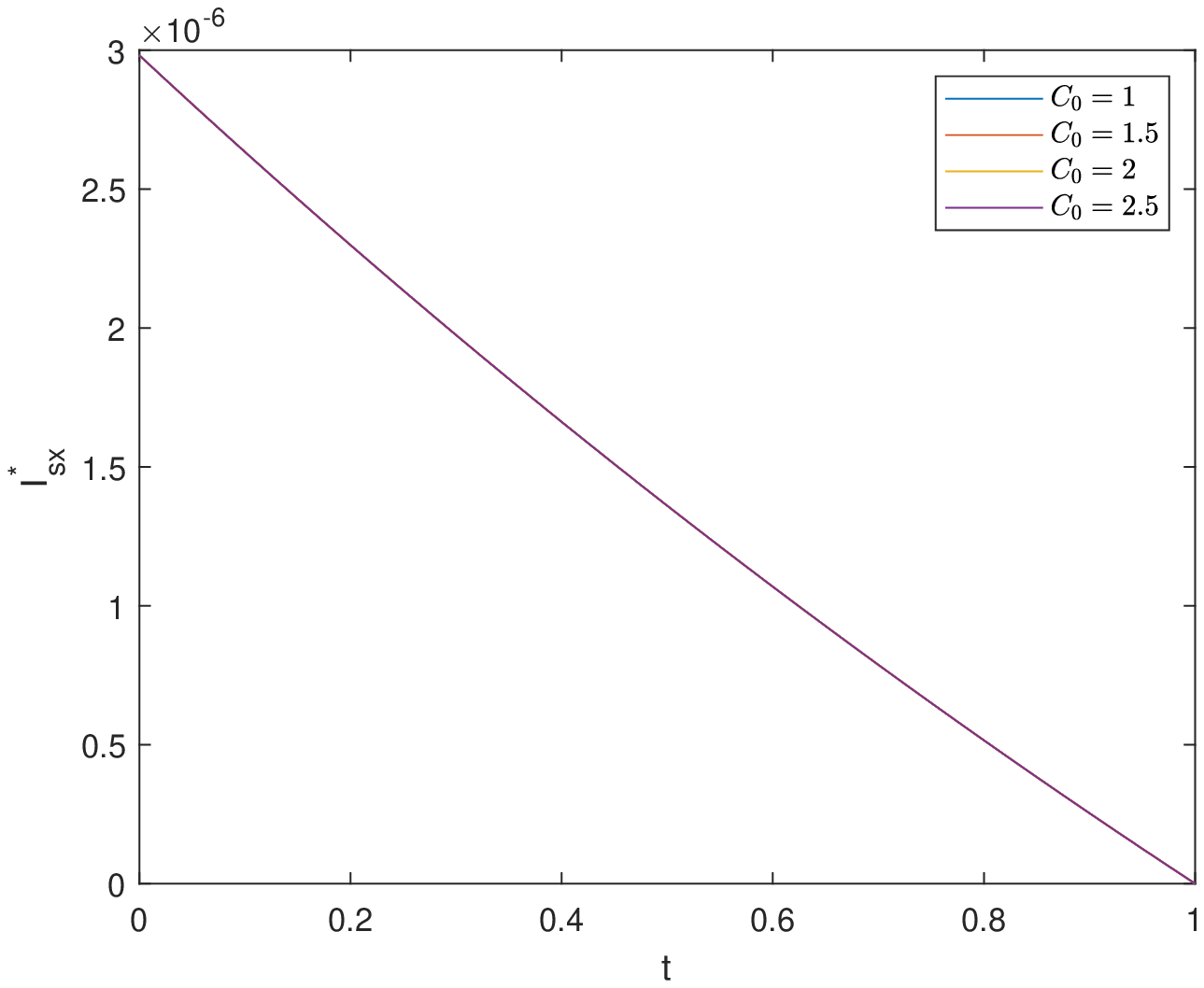}
    \end{minipage}
}
\subfigure[The impact of $C_0$ on $I^*_{s0}$]
{
    \begin{minipage}{7cm}
    \centering
    \includegraphics[scale=0.4]{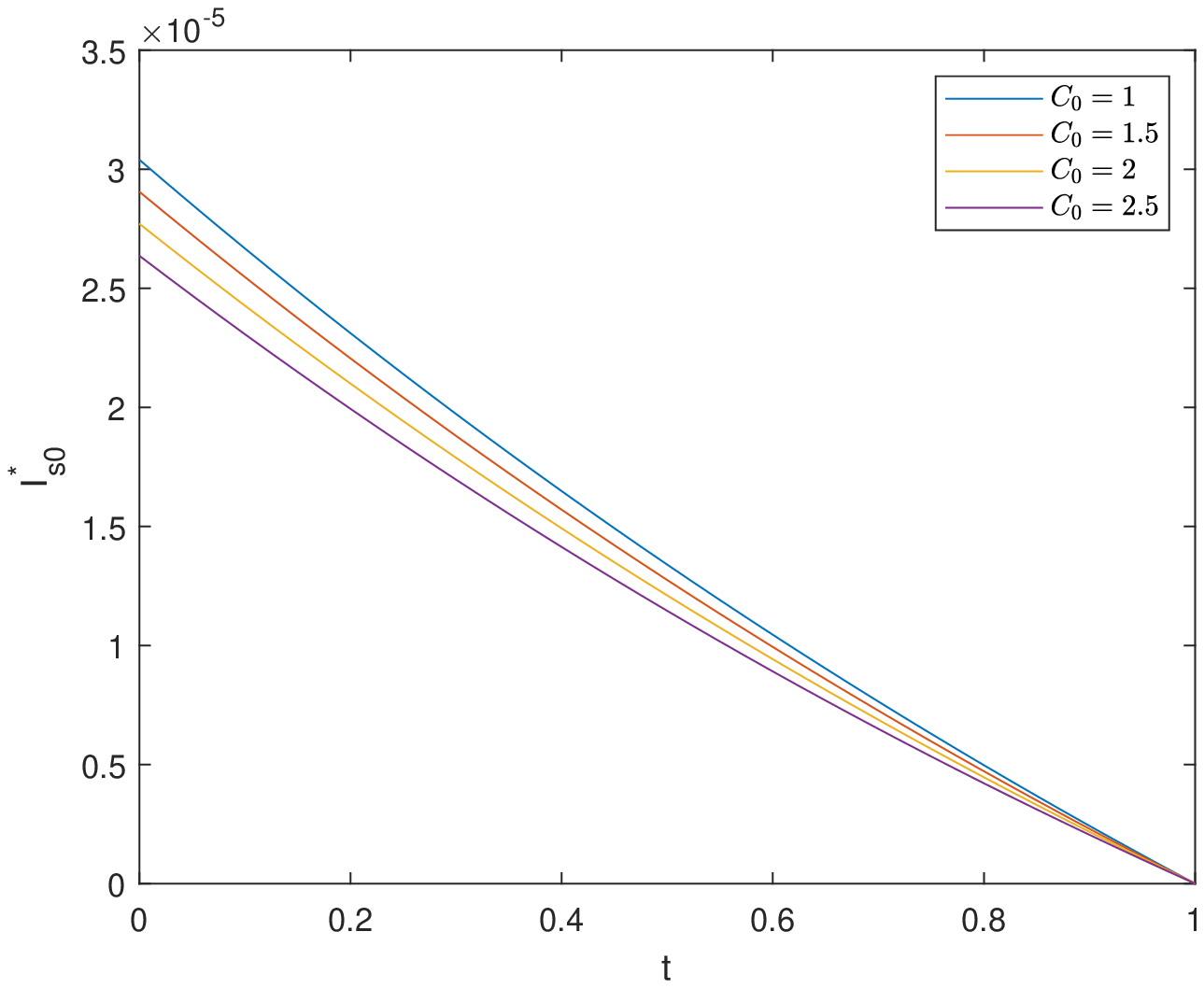}
    \end{minipage}
}
\caption{The impact of $C_0$ on $I^*_{s}$}
\label{fig:C0-Is}
\end{figure}

\begin{figure}[H]
\centering

\subfigure[The impact of $C_0$ on $I^*_{bx}$]
{
    \begin{minipage}{7cm}
    \centering
    \includegraphics[scale=0.4]{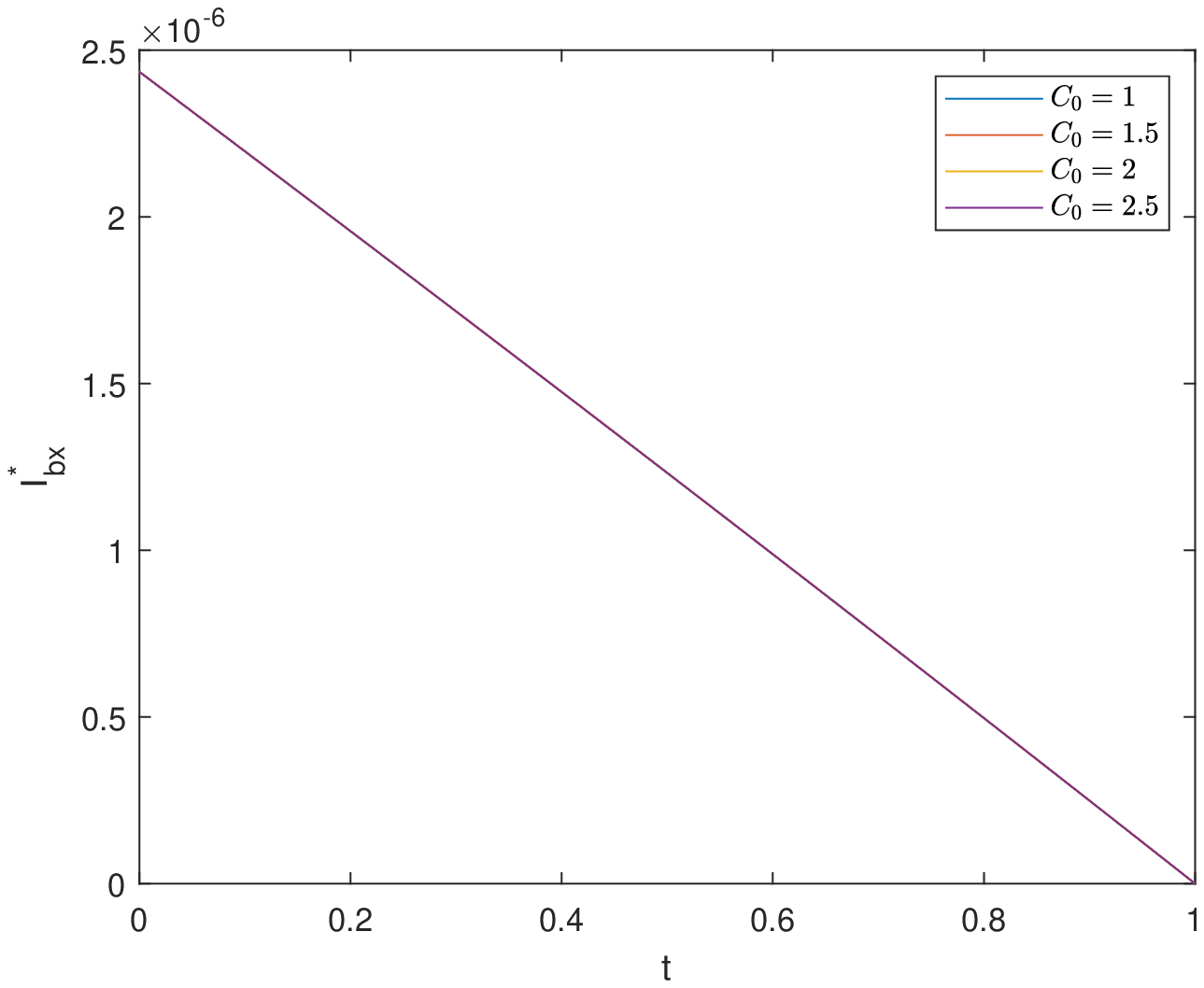}
    \end{minipage}
}
\subfigure[The impact of $C_0$ on $I^*_{b0}$]
{
    \begin{minipage}{7cm}
    \centering
    \includegraphics[scale=0.4]{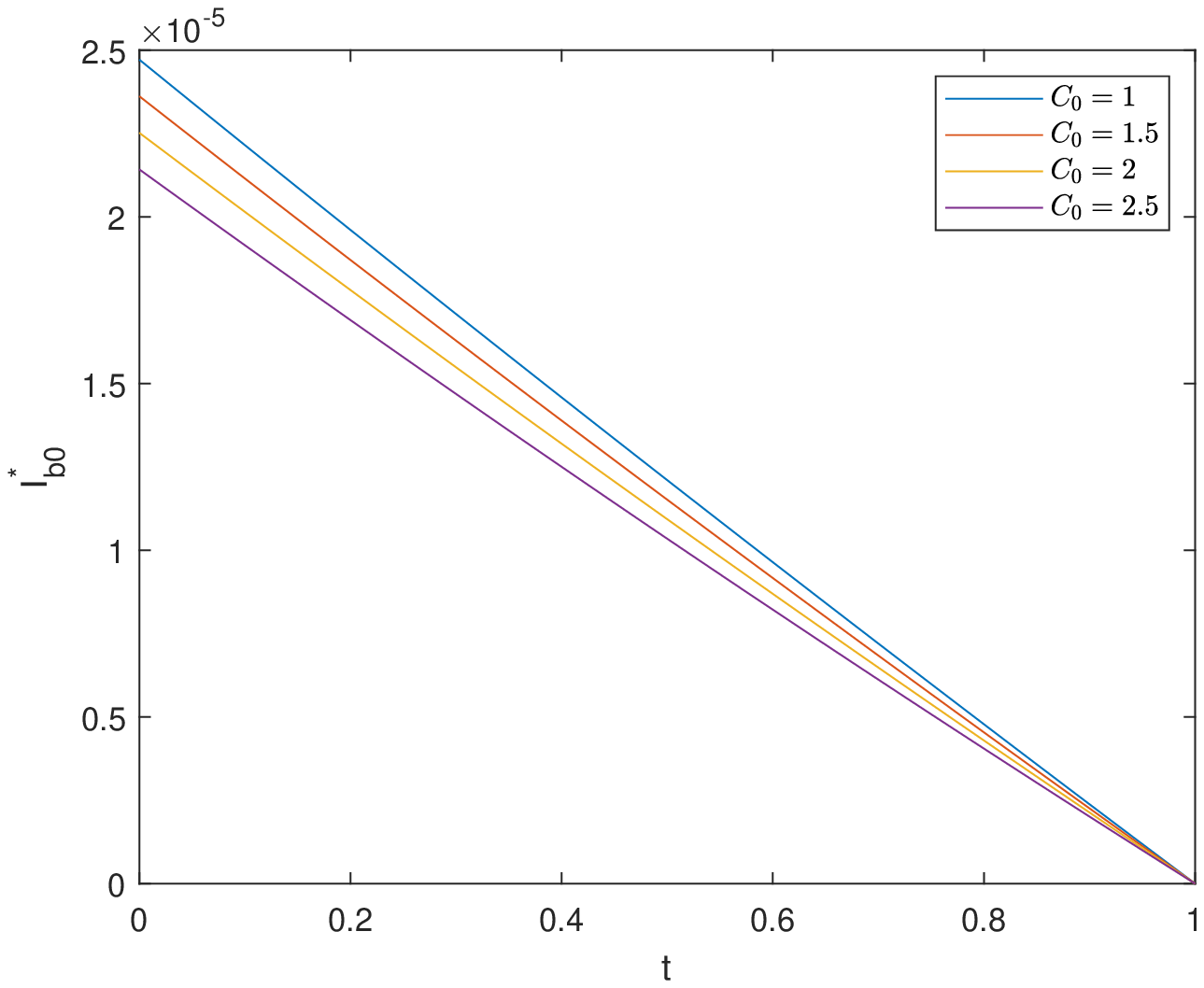}
    \end{minipage}
}
\caption{The impact of $C_0$ on $I^*_{b}$}
\label{fig:C0-Ib}
\end{figure}

When unit production cost $C_0$ increases, it is natural for the seller to raise the wholesale price (Figure $\ref{fig:C0-w}$). In our supply chain model, we consider the case where the buyer is a large retailer like Walmart that has the power to determine the terms of the seller. Such a buyer acts as the leader in the pricing strategies, and sets the retail price based on the researched market situation and consumer acceptance. There are usually many different products of the same type in the market, so the overall market situation will not change greatly due to the increase of the cost of a single product. Therefore, the change of the retail price in Figure $\ref{fig:C0-p}$ is meaningful. As the product cost $C_0$ increases, the buyer's retail price increases only slightly at the beginning. Then, as time $t$ goes to the terminal time 1, the market tends to be stable and the retail prices corresponding to different product costs tend to be equal due to the ongoing game between the seller and the buyer (Figure $\ref{fig:C0-p}$). Because the seller and the buyer need to invest capital for innovation, when the production cost $C_0$ increases, they generally choose to reduce the innovation efforts appropriately in order not to bear too much burden from excessive investment capital. Thus, the trend that the seller's and the buyer's innovation effort decrease with the increase of the product cost $C_0$ is consistent with the actual situation in the supply chain (Figure $\ref{fig:C0-Is}$ and Figure $\ref{fig:C0-Ib}$). And since the seller and the buyer don't need to put in any more innovation effort at the terminal moment, $I_{sx}^*(1)=0, I_{s0}^*(1)=0, I_{bx}^*(1)=0$, and $I_{b0}^*(1)=0$ are reasonable.

When for any $t\in[0,T]$, parameters $\beta_p(t)=0.1, \beta_w(t)=0.2, \beta_x=0.1, \gamma_p=0.1, \gamma_w=0.0001, \gamma_x=0.1, C_0=1, \alpha=1, T=1, r=0.05$ are fixed and innovation effectiveness parameter $\delta(t)$ takes different values in $[0.1, 0.2, 0.3, 0.4]$, Figures $\ref{fig:delta-w}, \ref{fig:delta-p}, \ref{fig:delta-Is}, \ref{fig:delta-Ib}$ shows the variations of coefficients $w_{x}^*, w_{0}^*, p_{x}^*, p_{0}^*, I_{sx}^*, I_{s0}^*, I_{bx}^*, I_{b0}^*$. From Figure $\ref{fig:delta-w}$ and Figure $\ref{fig:delta-p}$, we can know that the seller's wholesale price and the buyer's retail price do not change with the increase of $\delta(t)$. As the innovation effectiveness parameter $\delta(t)$ increases, the seller and the buyer have higher incentives to invest more innovation efforts. Thus, the seller's and the buyer's innovation effort increase with the value of $\delta(t)$ increasing (Figure $\ref{fig:delta-Is}$ and Figure $\ref{fig:delta-Ib}$).

\begin{figure}[H]
\centering

\subfigure[The impact of $\delta(t)$ on $w^*_{x}$]
{
    \begin{minipage}{7cm}
    \centering
    \includegraphics[scale=0.4]{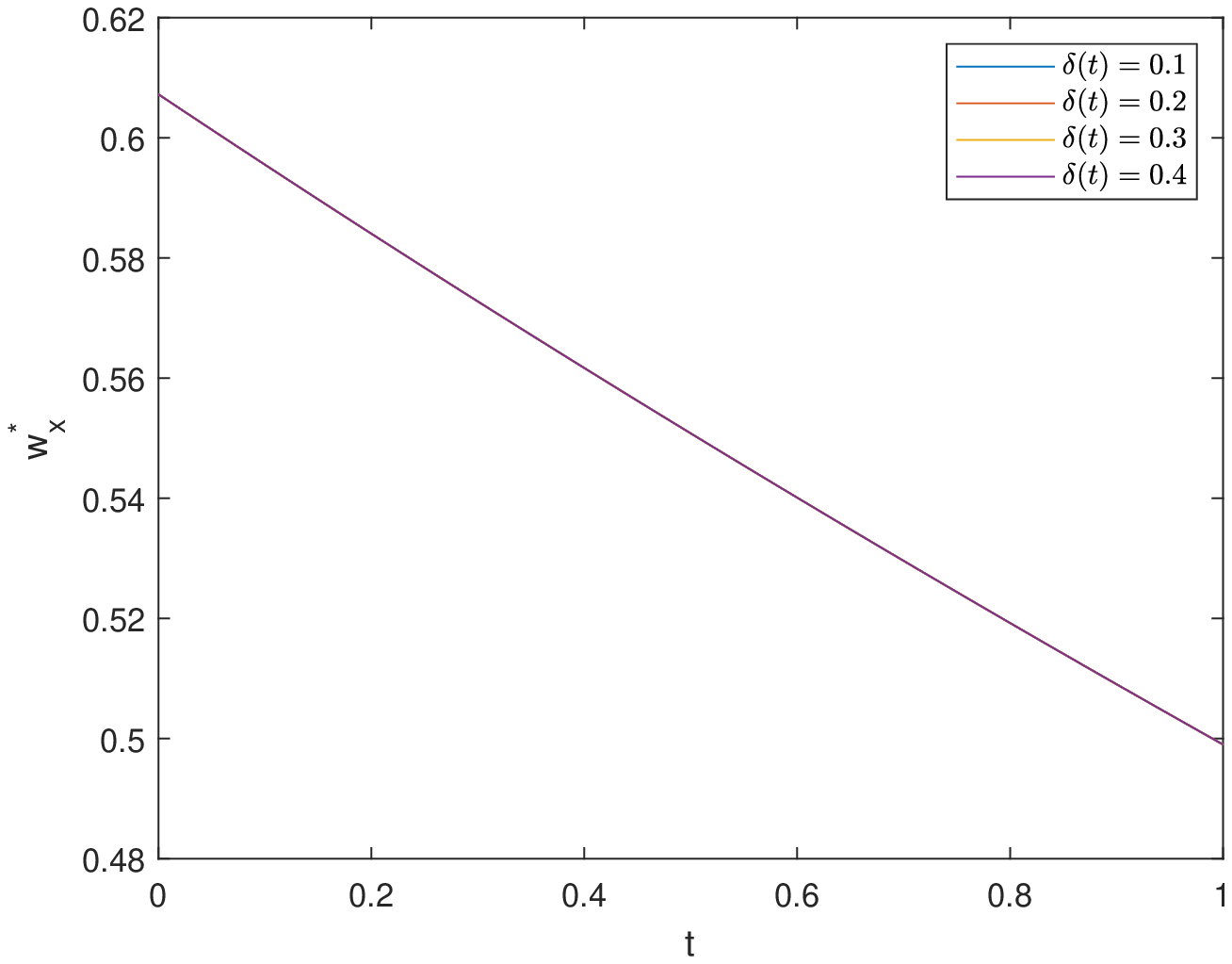}
    \end{minipage}
}
\subfigure[The impact of $\delta(t)$ on $w^*_{0}$]
{
    \begin{minipage}{7cm}
    \centering
    \includegraphics[scale=0.4]{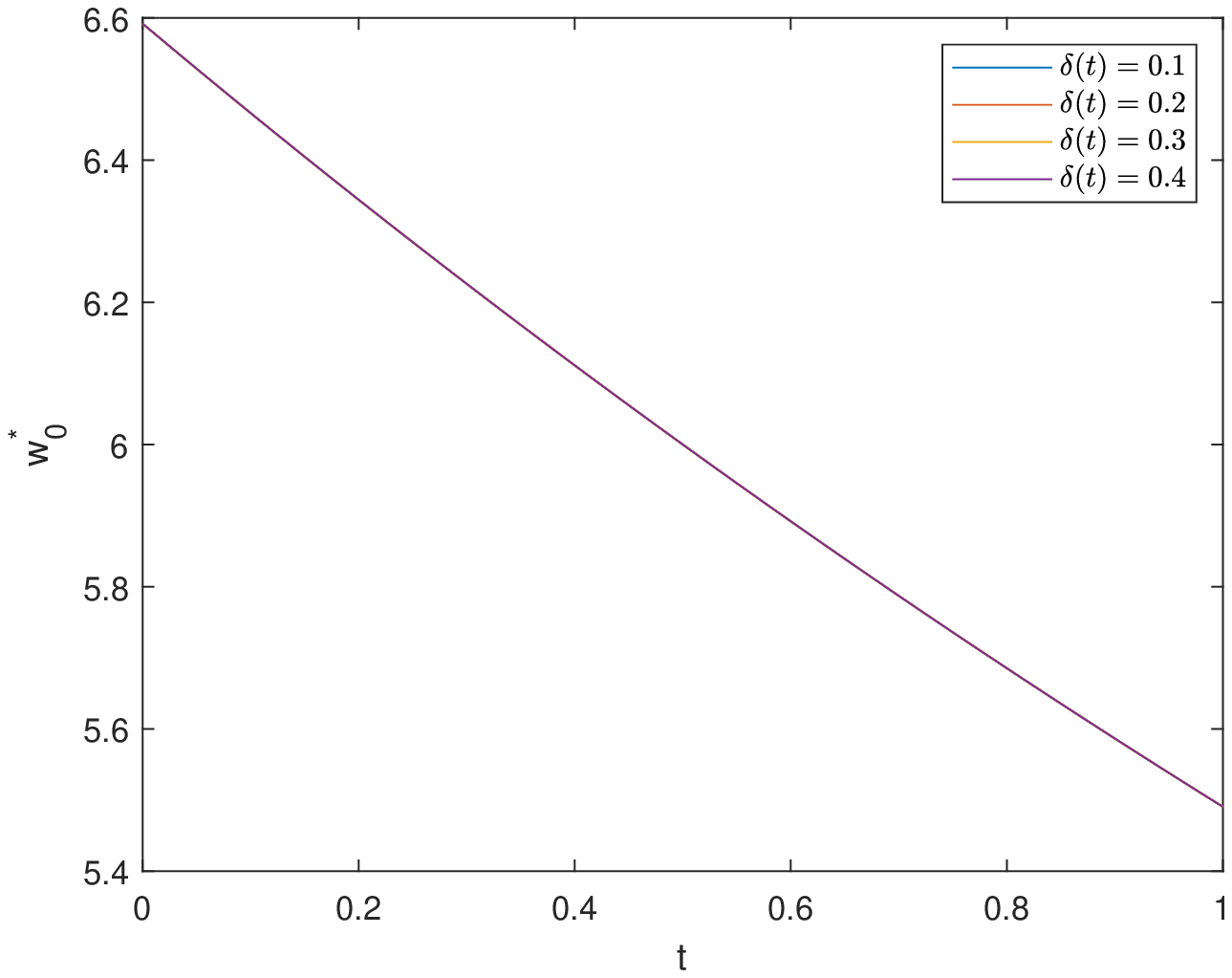}
    \end{minipage}
}
\caption{The impact of $\delta(t)$ on $w^*$}
\label{fig:delta-w}
\end{figure}

\begin{figure}[H]
\centering

\subfigure[The impact of $\delta(t)$ on $p^*_{x}$]
{
    \begin{minipage}{7cm}
    \centering
    \includegraphics[scale=0.4]{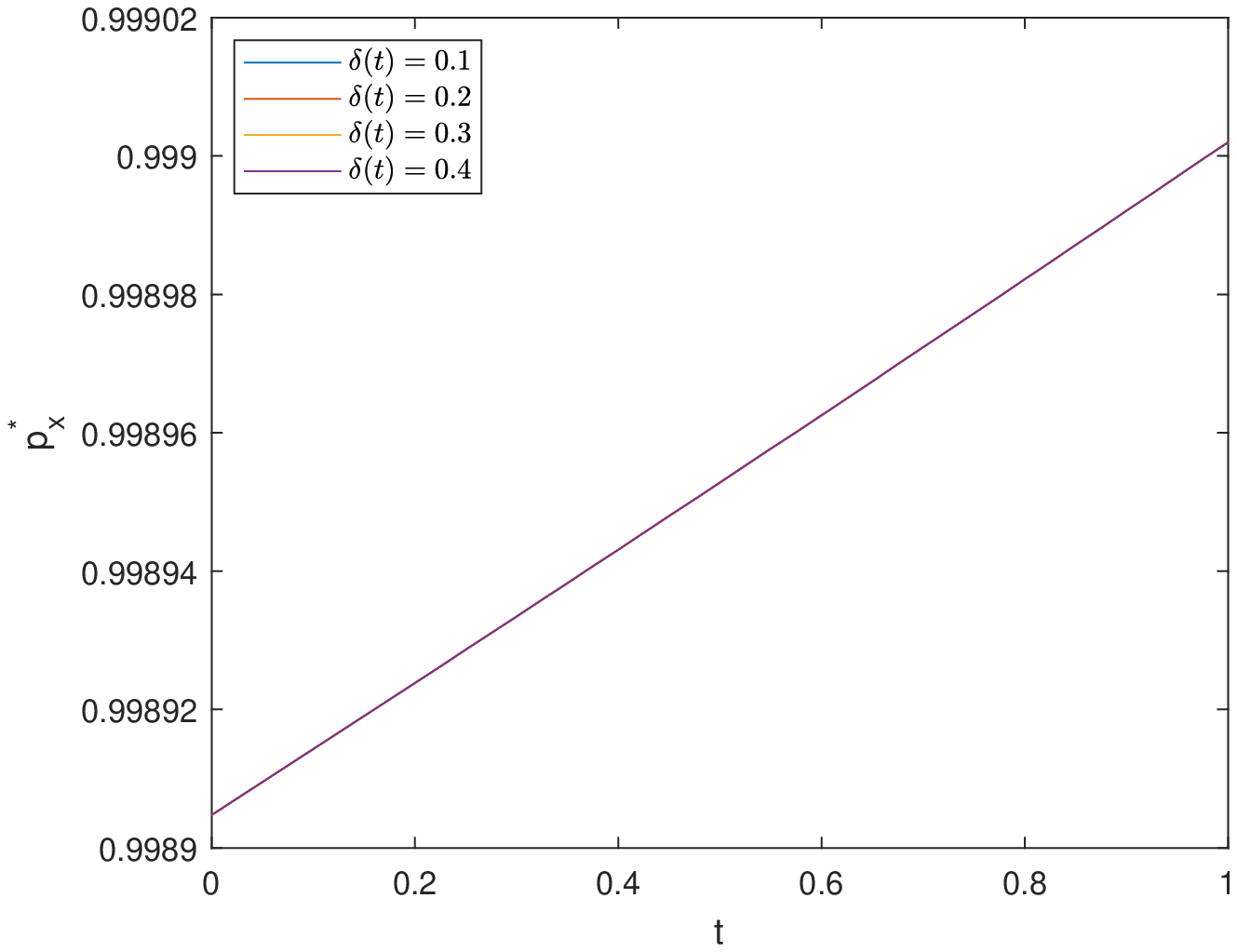}
    \end{minipage}
}
\subfigure[The impact of $\delta(t)$ on $p^*_{0}$]
{
    \begin{minipage}{7cm}
    \centering
    \includegraphics[scale=0.4]{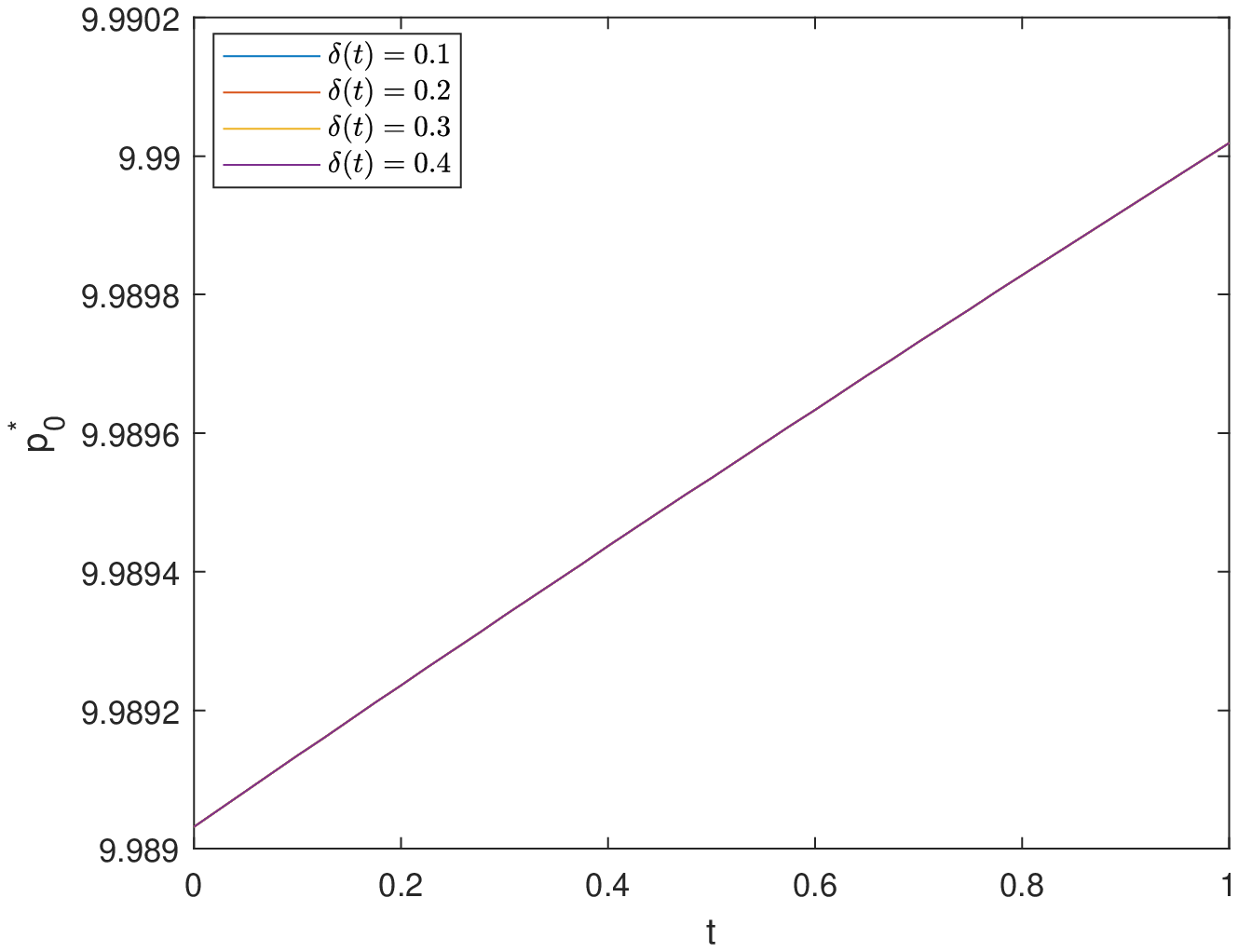}
    \end{minipage}
}
\caption{The impact of $\delta(t)$ on $p^*$}
\label{fig:delta-p}
\end{figure}

\begin{figure}[H]
\centering

\subfigure[The impact of $\delta(t)$ on $I^*_{sx}$]
{
    \begin{minipage}{7cm}
    \centering
    \includegraphics[scale=0.4]{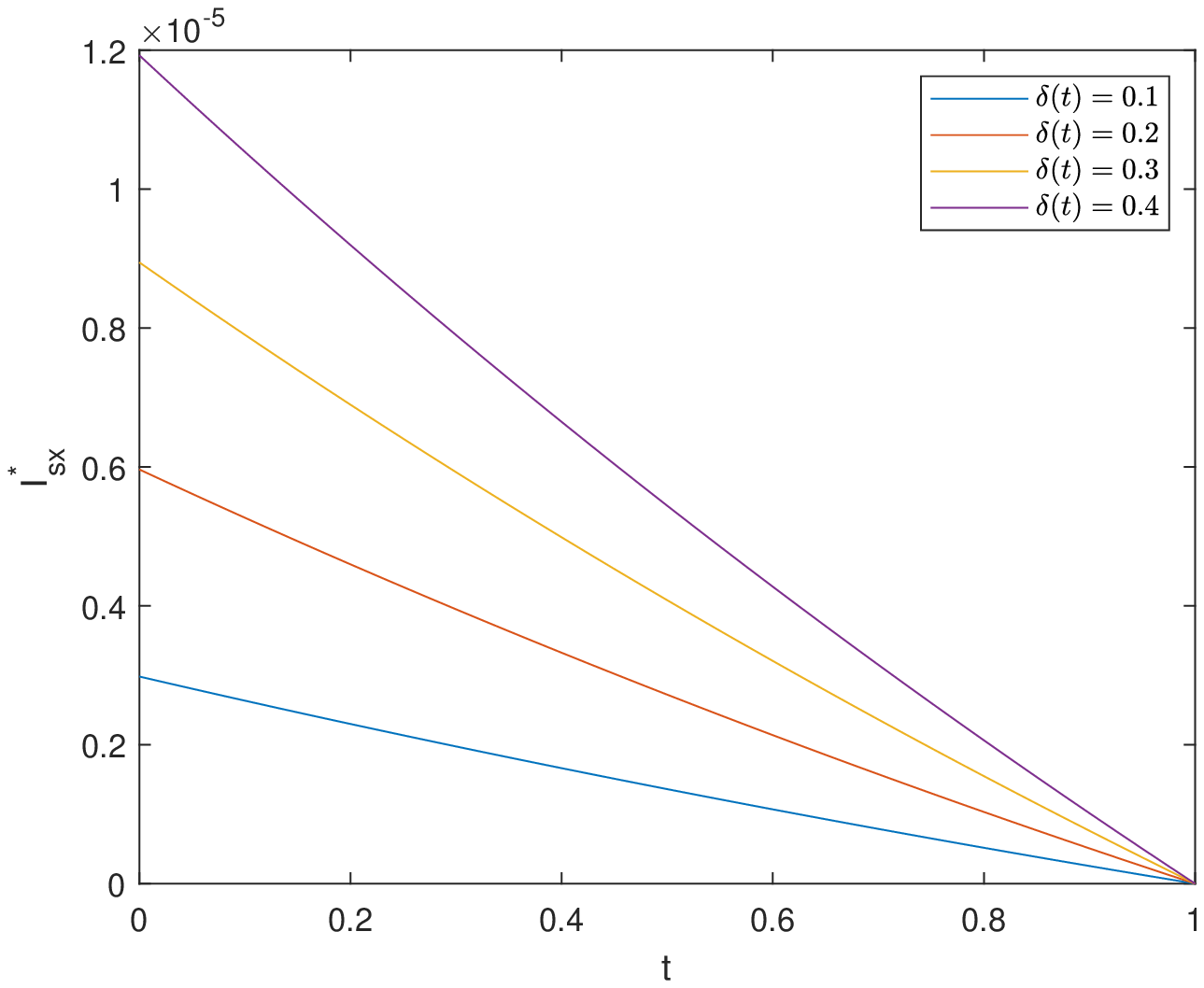}
    \end{minipage}
}
\subfigure[The impact of $\delta(t)$ on $I^*_{s0}$]
{
    \begin{minipage}{7cm}
    \centering
    \includegraphics[scale=0.4]{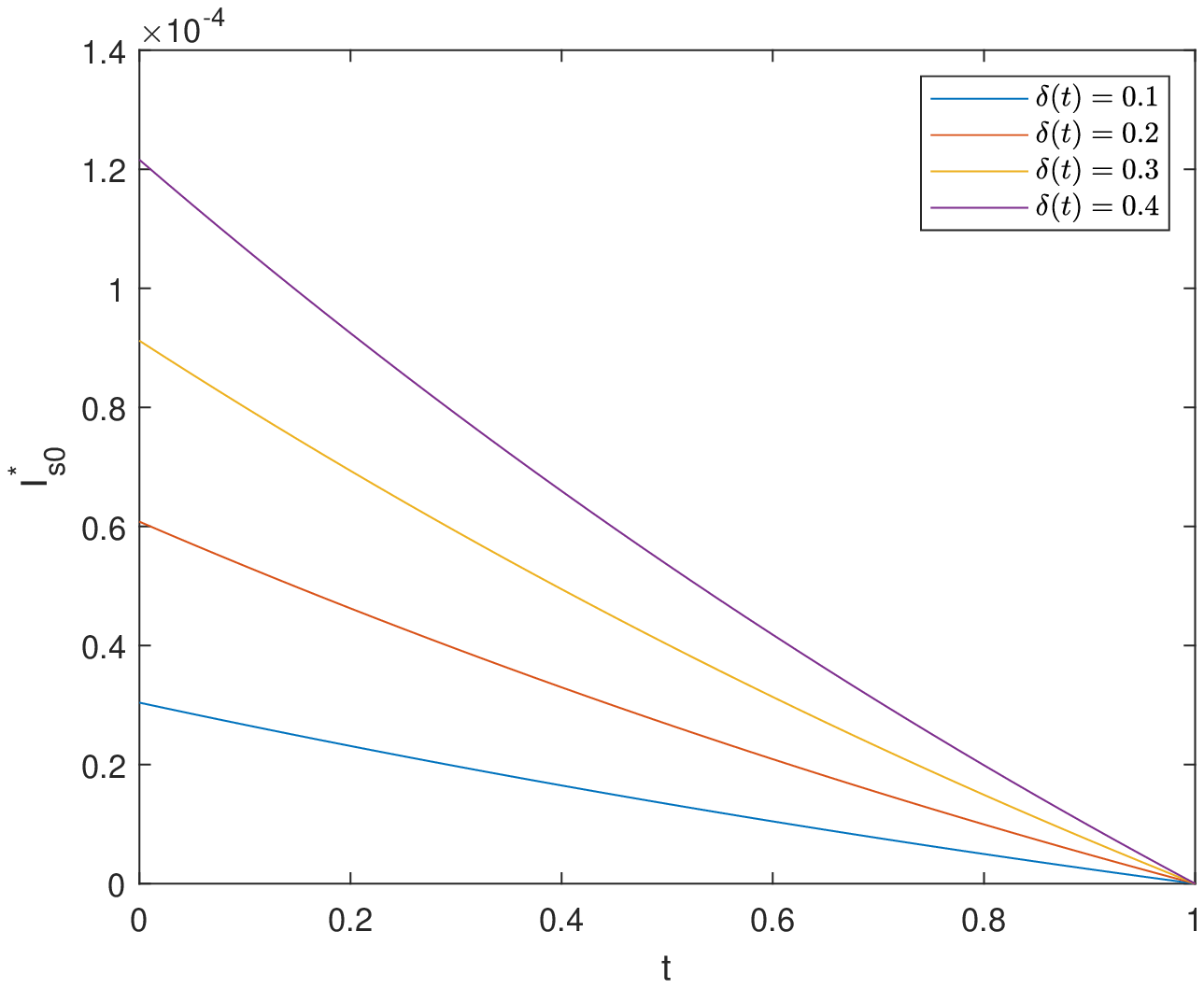}
    \end{minipage}
}
\caption{The impact of $\delta(t)$ on $I^*_s$}
\label{fig:delta-Is}
\end{figure}

\begin{figure}[H]
\centering

\subfigure[The impact of $\delta(t)$ on $I^*_{bx}$]
{
    \begin{minipage}{7cm}
    \centering
    \includegraphics[scale=0.4]{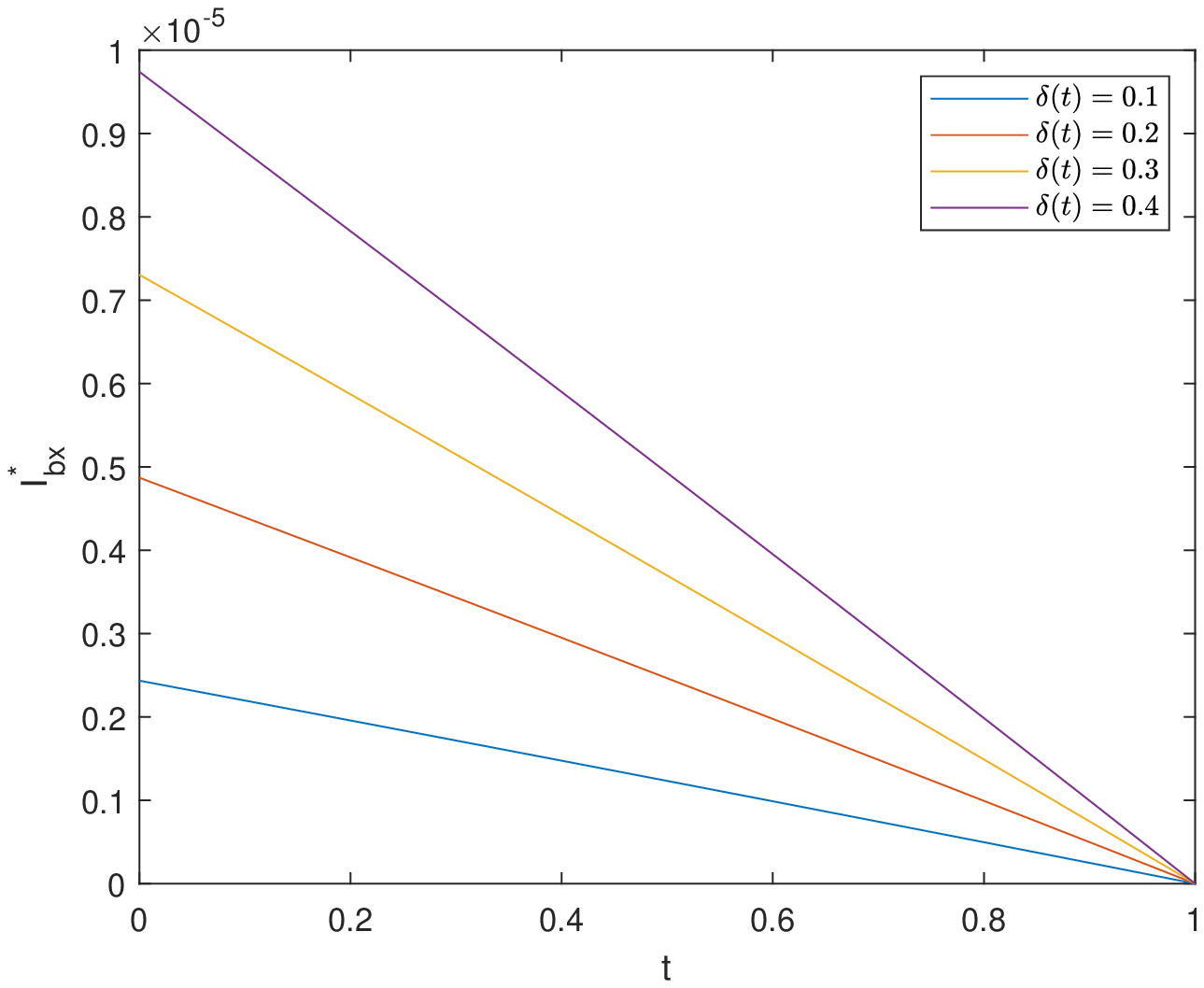}
    \end{minipage}
}
\subfigure[The impact of $\delta(t)$ on $I^*_{b0}$]
{
    \begin{minipage}{7cm}
    \centering
    \includegraphics[scale=0.4]{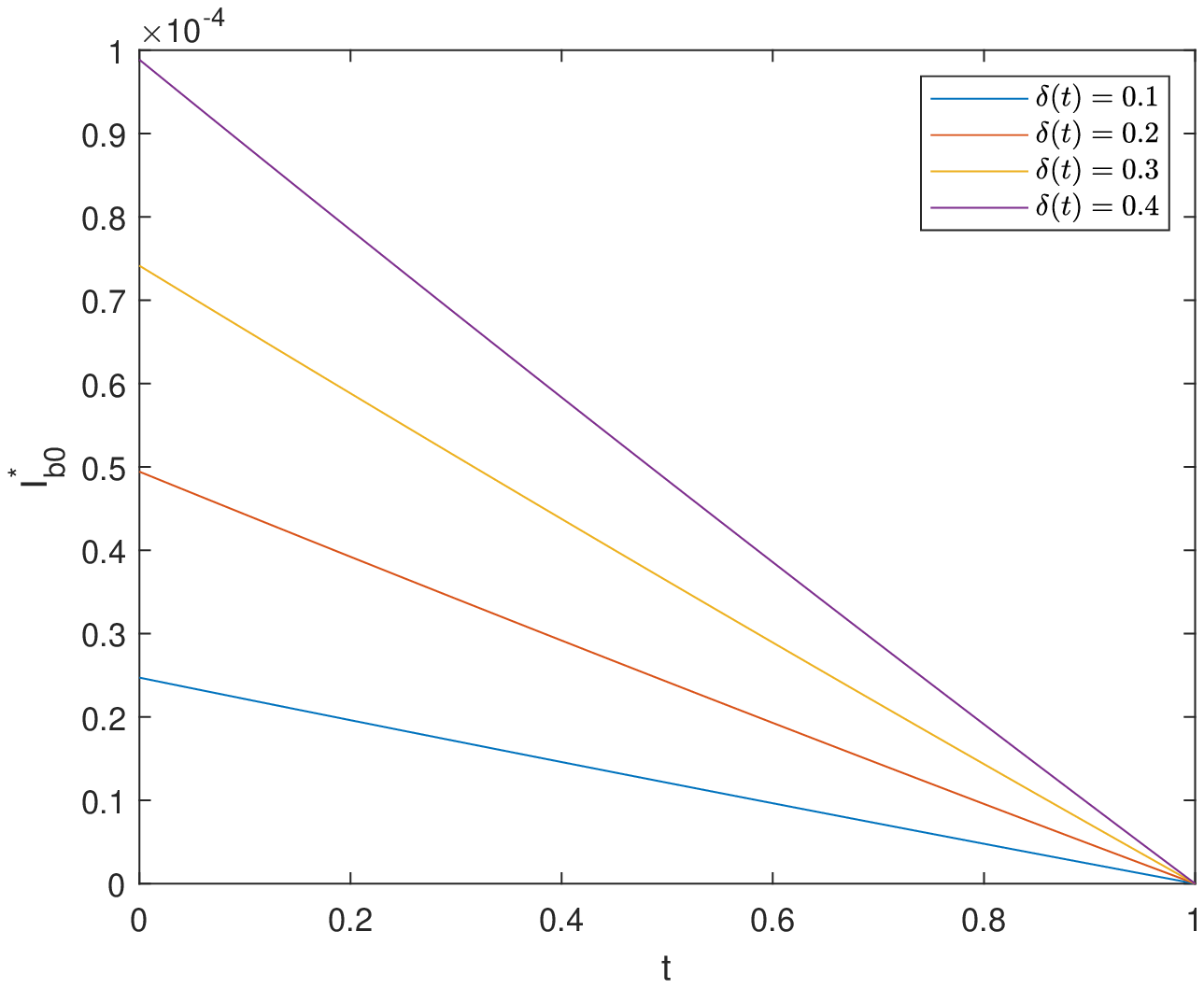}
    \end{minipage}
}
\caption{The impact of $\delta(t)$ on $I^*_b$}
\label{fig:delta-Ib}
\end{figure}

\section{Concluding remarks}

\hspace{0.4cm} In this paper, we investigate a high-dimensional mixed leadership stochastic differential game in a finite time interval under the feedback information mode. And the drift term and diffusion term of the state equation of the game model we consider both contain the control variables of participants, which is absolutely not a trivial extension of Bensoussan et al. \cite{BCCS2019}. Consequently, the system of coupled HJB equations in our verification theorem of the feedback Stackelberg-Nash equilibrium is a system of coupled fully nonlinear parabolic PDEs.\\
\indent Next, we apply the verification theorem gained in section \ref{section 2.2} to deal with the dynamic innovation and pricing decision problem in the supply chain. Unlike Song et al. \cite{SCDL2021}, we study another situation that actually exists in the supply chain, where the buyer serves as the leader of pricing strategies. Therefore, it is necessary to use the mixed leadership game framework to solve the problem of innovation and pricing strategies in this situation. According to the verification theorem, we explicitly express the feedback equilibrium strategies of innovation and pricing with the solutions of coupled Riccati equations $(\ref{riccati P-2})$-$(\ref{riccati N-0})$. For the coupled Riccati equations $(\ref{riccati P-2})$-$(\ref{riccati N-0})$, we obtain the local existence and uniqueness of the solutions based on Picard-Lindel{\"o}f's Theorem. In the end, we numerically analyze the sensitivity of the feedback equilibrium strategies of innovation and pricing with respect to model parameters $C_0$ and $\delta(t)$, and get some managerial insights.

%%%%%%%%%%%%%%%%%%%%%%%%%%%%%%%%%%%%%%%%%%%%%%%%%%%%%%%%%%%%

\end{document}